\DeclareMathOperator\id{id}
\DeclareMathOperator\GL{GL}
\DeclareMathOperator\PGL{PGL}
\DeclareMathOperator\PG{PG}
\DeclareMathOperator\Char{Char}
\DeclareMathOperator\spn{span}
\DeclareMathOperator\sgn{sgn}
\DeclareMathOperator\SL{SL}
\DeclareMathOperator\Sp{Sp}
\newcommand{\cB}{{\mathcal B}} 
\newcommand{\cH}{{\mathcal H}} 
\newcommand{\cM}{{\mathcal M}} 
\newcommand{\cL}{{\mathcal L}}
\newcommand{\cO}{{\mathcal O}}
\newcommand{\cQ}{{\mathcal Q}}
\newcommand{\cS}{{\mathcal S}}
\newcommand{\cR}{{\mathcal R}}
\newcommand{\bF}{{\mathbb F}}
\newcommand{\bP}{{\mathbb P}}
\newcommand{\vA}{{\bm A}}
\newcommand{\vE}{{\bm E}}
\newcommand{\vU}{{\bm U}}
\newcommand{\vV}{{\bm V}}
\newcommand{\vW}{{\bm W}}
\newcommand{\vX}{{\bm X}}
\newcommand{\vY}{{\bm Y}}
\newcommand{\va}{{\bm a}}
\newcommand{\vb}{{\bm b}}
\newcommand{\ve}{{\bm e}}
\newcommand{\vi}{{\bm i}}
\newcommand{\vj}{{\bm j}}
\newcommand{\vk}{{\bm k}}
\newcommand{\vs}{{\bm s}}
\newcommand{\vu}{{\bm u}}
\newcommand{\trenn}{{-\hspace{0pt}}}
\newcommand{\TProdm}{{\bigotimes_{k=1}^m \vV_k}} 
\newcommand{\TProdWm}{{\bigotimes_{k=1}^m \vW_k}} 
\newcommand{\Segre}{{\cS_{1,1,1}}}
\newcommand{\Segrem}{{\cS_{(m)}}}
\newcommand{\Alt}{[\cdot,\cdot]}
\newcommand{\Stab}[2]{G_{\cS_{(#1)}(#2)}} 
\newcommand{\oben}[1]{^{(#1)}} 
\newtheorem{thm}{Theorem}
\newtheorem{prop}{Proposition}
\newtheorem{cor}{Corollary}
{\theoremstyle{definition}
\newtheorem{exa}{Example}
}
\begin{document}

\title{On invariant notions of Segre varieties in binary projective spaces}

\author{Hans Havlicek \and Boris Odehnal \and Metod Saniga}
\date{}

\maketitle


\begin{abstract}
\noindent Invariant notions of a class of Segre varieties $\Segrem(2)$ of
$\PG(2^m - 1,2)$ that are direct products of $m$ copies of PG$(1,2)$, $m$ being
any positive integer, are established and studied. We first demonstrate that
there exists a hyperbolic quadric that contains $\Segrem(2)$ and is invariant
under its projective stabiliser group $\Stab{m}{2}$. By embedding PG$(2^m -
1,2)$ into $\PG(2^m - 1,4)$, a basis of the latter space is constructed that is
invariant under $\Stab{m}{2}$ as well. Such a basis can be split into two
subsets whose spans are either real or complex{\trenn}conjugate subspaces
according as $m$ is even or odd. In the latter case, these spans can, in
addition, be viewed as indicator sets of a $\Stab{m}{2}${\trenn}invariant
geometric spread of lines of $\PG(2^m - 1,2)$. This spread is also related with
a $\Stab{m}{2}${\trenn}invariant non-singular Hermitian variety.
\par
The case $m=3$ is examined in detail to illustrate the theory. Here, the lines
of the invariant spread are found to fall into four distinct orbits under
$\Stab{3}{2}$, while the points of $\PG(7,2)$ form five orbits.
\par~\par\noindent
\emph{Mathematics Subject Classification (2010):} 51E20, 05B25, 15A69\\
\emph{Key words: Segre variety, binary projective space}
\end{abstract}

\section{Introduction}\label{se:intro}

The present note is concerned with \emph{invariant notions} in the ambient
space of certain Segre varieties over fields of characteristic two and, in
particular, over the smallest Galois field $\bF_2$. The attribute
\emph{invariant} always refers to the stabiliser of the Segre in the projective
group of the ambient space.
\par
Our text is organised as follows: In Section~\ref{se:background} we collect
some background results about those Segre varieties $\Segrem(F)$ which are
products of $m$ projective lines over a field $F$. The next section presents an
\emph{invariant quadric} of a Segre $\Segrem(F)$ for $m\geq 2$ and a ground
field $F$ of characteristic two (Theorem~\ref{thm:1}). This quadric is regular,
of maximal Witt index, contains the given Segre, and its polarity is the
fundamental polarity of this Segre. The following sections deal with Segre
varieties $\Segrem(2)$ over $\bF_2$. By extending the ground field of the
ambient space from $\bF_2$ to $\bF_4$ we find an \emph{invariant basis}
(Theorem~\ref{thm:2}) and an \emph{invariant Hermitian variety}
(Section~\ref{se:hermite}). The theory splits according as $m$ is even or odd.
In the latter case there is an \emph{invariant geometric line spread}
(Corollary~\ref{cor:1}) which gives also rise to a spread of the invariant
quadric. We make use of our previous results in Section~\ref{se:m=3}, where we
describe certain line orbits and all point orbits of the stabiliser group of
the Segre $S_{(3)}(2)$ in terms of the invariant basis. This complements
\cite[p.~82]{glynn+g+m+g-06z}, where a completely different description of
these point orbits was given without proof.
\par
There is a widespread literature on closely related topics, like notions of
\emph{rank of multi{\trenn}dimensional arrays} \cite{comon+tb+dl+c-09a},
\emph{secant varieties of Segre varieties} (mainly over the complex numbers)
\cite{abo+o+p-09a}, the very particular properties of certain Segre varieties
over $\bF_2$ \cite{shaw-08a}, \cite{shaw+g-09a}, \emph{quantum codes}
\cite{glynn+g+m+g-06z}, and \emph{entanglement of quantum bits} in physics
\cite{doko+o-09a}, \cite{heyd-08a}, \cite{levay+vrana-08a}. The few sources
which are cited here contain a wealth of further references.

\section{Notation and background results}\label{se:background}

Let $F$ be a commutative field and let $\vV_1,\vV_2,\ldots,\vV_m$ be $m\geq 1$
two{\trenn}dimensional vector spaces over $F$. So $\bP(\vV_k)=\PG(1,F)$ are
projective lines over $F$ for $k\in\{1,2,\ldots,m\}$. We consider the tensor
product $\TProdm$ and the projective space $\bP\big(\TProdm\big)=\PG(2^m-1,F)$.
The non-zero decomposable tensors of $\TProdm$ determine the \emph{Segre
variety} (see \cite{bura-61}, \cite{hirschfeld+thas-91})
\begin{equation*}
    \cS_{\underbrace{\scriptstyle 1,1,\ldots,1}_{m}}(F)
     = \big\{F\va_1\otimes\va_2\otimes\cdots\otimes\va_m \mid
     \va_k\in \vV_k\setminus\{0\} \big\}
\end{equation*}
of $\bP\big(\TProdm\big)$. This Segre will also be denoted by $\Segrem(F)$.
\par
We recall some facts which are well known from the classical case
\cite[p.~143]{bura-61}, where $F$ is the field of complex numbers. They can
immediately be carried over to our more general settings. Given a basis
$(\ve_0\oben{k},\ve_1\oben{k})$ for each vector space $\vV_k$,
$k\in\{1,2,\ldots,m\}$, the tensors
\begin{equation}\label{eq:basis}
    \vE_{i_1,i_2,\ldots, i_m}
    :=\ve_{i_1}\oben{1} \otimes\ve_{i_2}\oben{2}\otimes\cdots\otimes\ve_{i_m}\oben{m}
    \mbox{~~~with~~~} (i_1,i_2,\ldots,i_m)\in I_m:=\{0,1\}^m
\end{equation}
constitute a basis of $\TProdm$. For any multi-index $\vi=(i_1,i_2,\ldots,
i_m)\in I_m$ the \emph{opposite} multi-index, in symbols $\vi'$, is
characterised by $i_k\neq i'_k$ for all $k\in\{1,2,\ldots,m\}$. In other words,
two multi{\trenn}indices are opposite if, and only if, their Hamming distance
is maximal.
\par
Let $f_k\in\GL(\vV_k)$ for $k\in\{1,2,\ldots,m\}$. Then
\begin{equation}\label{eq:kronecker}
    f_1\otimes f_2 \otimes \cdots \otimes f_m\in\GL\Big(\TProdm\Big)
\end{equation}
denotes their Kronecker (tensor) product. Each permutation $\sigma\in S_m$
gives rise to linear bijections $\vV_k\to \vV_{\sigma(k)}$ sending
$(\ve_0\oben{k},\ve_1\oben{k})$ to
$(\ve_0\oben{\sigma(k)},\ve_1\oben{\sigma(k)})$. Also, the symmetric group
$S_m$ acts on $I_m$ via $\sigma(\vi)=\sigma(i_1,i_2,\ldots,i_m)=
(i_{\sigma^{-1}(1)},i_{\sigma^{-1}(2)},\ldots,i_{\sigma^{-1}(m)})$. There is a
unique mapping
\begin{equation}\label{eq:f-sigma}
    f_\sigma \in {\GL}\Big(\TProdm\Big)
    \mbox{~~such that~~}\vE_\vi \mapsto \vE_{\sigma(\vi)}
    \mbox{~~for all~~} \vi\in I_m.
\end{equation}
Clearly, this $f_\sigma$ depends on the chosen bases. The subgroup of
${\GL}\big(\TProdm\big)$ preserving decomposable tensors is generated by all
transformations of the form (\ref{eq:kronecker}) and (\ref{eq:f-sigma}). It
induces the \emph{stabiliser} $\Stab{m}{F}$ of the Segre $\Segrem(F)$ within
the projective group ${\PGL}\big(\TProdm\big)$.

\par
Each of the vector spaces $\vV_k$ admits a symplectic (\emph{i.~e.},
non-degenerate and alternating) bilinear form\footnote{We use the same symbol
for all these forms. Note that $\Sp(\vV_k,\Alt)=\SL(\vV_k)$, since
$\dim\vV_k=2$ for all $k\in\{1,2,\ldots,m\}$. This coincidence of a symplectic
group with a special linear group underpins much of the mathematics used in
this article.} $\Alt : \vV_k\times \vV_k\to F$. Consequently, $\TProdm$ is
equipped with a bilinear form, again denoted as $\Alt$, which is given by
\begin{equation}\label{eq:[].basisfrei}
    \big[\va_1\otimes\va_2\otimes\cdots\otimes\va_m,\vb_1\otimes\vb_2\otimes\cdots\otimes\vb_m \big]:=
     \prod_{k=1}^m [\va_k,\vb_k] \mbox{~~for~~} \va_k,\vb_k\in \vV_k,
\end{equation}
and extending bilinearly. Like the forms on $\vV_k$ this bilinear form on
$\TProdm$ is unique up to a non-zero factor in $F$. In projective terms the
form $\Alt$ on $\TProdm$ (or any proportional one) determines the
\emph{fundamental polarity} of $\Segrem(F)$, \emph{i.~e.}, a polarity which
sends $\Segrem(F)$ to its dual. This polarity is orthogonal when $m$ is even
and $\Char F\neq 2$, but null otherwise: Indeed, it suffices to consider the
tensors of our basis (\ref{eq:basis}). Given $\vi,\vj\in I_m$ we have
\begin{eqnarray}\label{eq:[].basis1}
    [\vE_{\vi},\vE_{\vi'}]&=&\prod_{k=1}^m [\ve_{i_k}\oben{k},\ve_{i'_k}\oben{k}]
    = (-1)^m[\vE_{\vi'},\vE_{\vi}]\neq 0,\\
    \label{eq:[].basis2}
    [\vE_{\vi},\vE_{\vj}]&=&0 \mbox{~~~for all~~~}\vj\neq \vi'.
\end{eqnarray}
Hence $\Alt$ is symmetric when $m$ is even and $\Char F\neq 2$, and it is
alternating otherwise.
\par
Let $m$ be even and $\Char F\neq 2$. Then $Q:\TProdm\to F: \vX\mapsto
[\vX,\vX]$ is a quadratic form having Witt index $2^{m-1}$ and rank $2^m$. So,
the fundamental polarity of the Segre $\Segrem(F)$ is the polarity of a regular
quadric. The Segre coincides with this quadric precisely when $m=2$.

\section{The invariant quadric}\label{se:quadratic}

We now focus on the case when $F$ has characteristic two. Here $\Alt$ is a
symplectic bilinear form on $\TProdm$ for any $m\geq 1$, whence the fundamental
polarity of the Segre $\Segrem(F)$ is always null. Furthermore,
(\ref{eq:[].basis1}) simplifies to
\begin{equation}\label{eq:[].basis12}
    [\vE_{\vi},\vE_{\vi'}] = \prod_{k=1}^m [\ve_{0}\oben{k},\ve_{1}\oben{k}]
    = [\vE_{\vi'},\vE_{\vi}]\neq 0.
\end{equation}

\begin{prop}\label{prop:1}
Let $m\geq 2$ and $\Char F=2$. Then there is a unique quadratic form
$Q:\TProdm\to F$ satisfying the following two properties:
\renewcommand{\theenumi}{\emph{\arabic{enumi}}}
\begin{enumerate}
    \item $Q$ vanishes for all decomposable tensors.

    \item The symplectic bilinear form $\Alt:\TProdm\times\TProdm\to F$ is
        the polar form of $Q$.
\end{enumerate}
\end{prop}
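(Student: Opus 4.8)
The plan is to work in the coordinates furnished by the basis $(\vE_\vi)_{\vi\in I_m}$ of (\ref{eq:basis}) and to exploit the fact that, in characteristic two, a quadratic form is determined by its polar form only up to the \emph{diagonal} (squared) coefficients. Writing an arbitrary tensor as $\vX=\sum_{\vi\in I_m}x_\vi\vE_\vi$, every quadratic form on $\TProdm$ reads
\begin{equation*}
 Q(\vX)=\sum_{\vi\in I_m}a_\vi\,x_\vi^2+\sum_{\{\vi,\vj\}}b_{\vi\vj}\,x_\vi x_\vj ,
\end{equation*}
the second sum ranging over unordered pairs with $\vi\neq\vj$; its polar form equals $\sum_{\{\vi,\vj\}}b_{\vi\vj}(x_\vi y_\vj+x_\vj y_\vi)$ and is thus insensitive to the $a_\vi$. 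Demanding in property~\emph{2} that this polar form be $\Alt$, and invoking (\ref{eq:[].basis12}) and (\ref{eq:[].basis2}), forces $b_{\vi\vj}$ to equal the common value $c:=[\vE_\vi,\vE_{\vi'}]\neq 0$ when $\vj=\vi'$ and to vanish otherwise, while leaving the $a_\vi$ entirely free. So property~\emph{2} pins down $Q$ completely except for its diagonal, and the whole question reduces to showing that property~\emph{1} forces $a_\vi=0$ for every $\vi$ and is then automatically fulfilled.

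To settle uniqueness I would evaluate $Q$ on the basis tensors themselves. Since $\vE_\vi=\ve_{i_1}\oben{1}\otimes\cdots\otimes\ve_{i_m}\oben{m}$ is decomposable and has a single nonzero coordinate, all off-diagonal terms drop out and $Q(\vE_\vi)=a_\vi$; property~\emph{1} then gives $a_\vi=0$. Hence any admissible $Q$ must coincide with
\begin{equation*}
 Q_0(\vX):=c\sum_{\{\vi,\vi'\}}x_\vi x_{\vi'},
\end{equation*}
the sum running over the $2^{m-1}$ pairs of mutually opposite multi-indices. This establishes uniqueness; it remains to verify that $Q_0$ genuinely vanishes on \emph{all} decomposable tensors.

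For existence I would substitute a general decomposable tensor $\va_1\otimes\cdots\otimes\va_m$, with $\va_k=\lambda_0\oben{k}\ve_0\oben{k}+\lambda_1\oben{k}\ve_1\oben{k}$, into $Q_0$. Its $\vE_\vi$-coordinate is $x_\vi=\prod_{k=1}^m\lambda_{i_k}\oben{k}$, and because opposite indices satisfy $\{i_k,i'_k\}=\{0,1\}$ for every $k$, each product $x_\vi x_{\vi'}$ collapses to the \emph{same} value $\prod_{k=1}^m\lambda_0\oben{k}\lambda_1\oben{k}$, independent of the pair. Consequently $Q_0$ evaluates to $c\cdot 2^{m-1}\prod_{k=1}^m\lambda_0\oben{k}\lambda_1\oben{k}$, and here the hypothesis $m\geq 2$ enters decisively: $2^{m-1}$ is even, hence zero since $\Char F=2$, so $Q_0$ vanishes on every decomposable tensor and property~\emph{1} holds.

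The only genuinely delicate point is the bookkeeping of this last step: recognising that the cross terms of $Q_0$ do not cancel pairwise but all \emph{coincide}, so that their total is one quantity multiplied by the even integer $2^{m-1}$. Everything else is routine linear algebra over a field of characteristic two. I would also remark that the argument collapses precisely at $m=1$, where $2^{m-1}=1$ and every nonzero tensor is already decomposable, so that properties~\emph{1} and~\emph{2} become contradictory; this accounts for the restriction $m\geq 2$.
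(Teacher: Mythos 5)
Your proof is correct, and it arrives at exactly the form the paper uses: your $Q_0$ is the paper's $Q$ from (\ref{eq:def.Q}), written in coordinates as in (\ref{eq:koo.Q}), and your existence verification (all $2^{m-1}$ products $x_\vi x_{\vi'}$ of a decomposable tensor coincide, so their sum is $2^{m-1}$ times one quantity, hence zero since $m\geq 2$ and $\Char F=2$) is the same computation as step (a) of the paper's proof. Where you genuinely diverge is in how property \emph{2} and uniqueness are handled. The paper verifies property \emph{2} by polarising its $Q$ directly on pairs of basis tensors, and proves uniqueness abstractly: if $Q$ and $\widetilde Q$ both satisfy the two properties, then $Q+\widetilde Q$ has zero polar form, hence is additive and satisfies $(Q+\widetilde Q)(\lambda\vX)=\lambda^2(Q+\widetilde Q)(\vX)$, so it is a semilinear map into $F$ viewed as a vector space over its subfield $F^\Box$ of squares; its kernel is therefore a subspace, and since it contains all decomposable tensors --- in particular the basis (\ref{eq:basis}) --- it is all of $\TProdm$. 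You instead invoke the coordinate normal form of a quadratic form in characteristic two, $\sum_\vi a_\vi x_\vi^2+\sum_{\{\vi,\vj\}}b_{\vi\vj}x_\vi x_\vj$, note that the polar form sees only the $b_{\vi\vj}$ (so property \emph{2} pins these down by matching against $\Alt$ on basis pairs), and observe that $a_\vi=Q(\vE_\vi)$ is killed by property \emph{1} because each $\vE_\vi$ is decomposable. Both uniqueness arguments ultimately rest on the same fact, namely that the basis consists of decomposable tensors, but yours is more elementary (no semilinearity over $F^\Box$ is needed, only the standard expansion of a quadratic form in a basis, which is valid over any field), while the paper's argument is more intrinsic and never needs the general coordinate shape of a quadratic form. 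Your closing remark on the collapse at $m=1$ matches the paper's observation following the proposition.
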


\begin{proof}
(a) We denote by $I_{m,0}$ the set of all multi{\trenn}indices
$(i_1,i_2,\ldots,i_m)\in I_m$ with $i_1=0$. In terms of our basis
(\ref{eq:basis}) a quadratic form is given by
\begin{equation}\label{eq:def.Q}
    Q:\TProdm\to F : \vX\mapsto
    \sum_{\vi\,\in\, I_{m,0}}\frac{[\vE_\vi,\vX][\vE_{\vi'},\vX]}
                              {[\vE_\vi,\vE_{\vi'}]}.
\end{equation}
Given an arbitrary decomposable tensor we have
\begin{eqnarray*}
    Q(\va_1\otimes\cdots\otimes\va_m)
    &=&
    \sum_{\vi\,\in\, I_{m,0}}\frac{[\vE_\vi,\va_1\otimes\cdots\otimes\va_m]
                               [\vE_{\vi'},\va_1\otimes\cdots\otimes\va_m]}
                              {[\vE_\vi,\vE_{\vi'}]}\\
    &=&\sum_{\vi\,\in\, I_{m,0}}\frac{[\ve_0\oben{1},\va_1][\ve_1\oben{1},\va_1]
                               \cdots [\ve_0\oben{m},\va_m][\ve_1\oben{m},\va_m]}
                              {[\ve_0\oben{1},\ve_1\oben{1}]\cdots [\ve_0\oben{m},\ve_1\oben{m}]}\\
    &=&2^{m-1}\,\frac{[\ve_0\oben{1},\va_1][\ve_1\oben{1},\va_1]
                               \cdots [\ve_0\oben{m},\va_m][\ve_1\oben{m},\va_m]}
                              {[\ve_0\oben{1},\ve_1\oben{1}]\cdots [\ve_0\oben{m},\ve_1\oben{m}]}\\
    &=&0,
\end{eqnarray*}
where we used (\ref{eq:[].basis12}), $\# I_{m,0}= 2^{m-1}$, $m\geq 2$, and
$\Char F=2$. This verifies property 1.
\par
(b) Let $\vj,\vk\in I$ be arbitrary multi{\trenn}indices. Polarising $Q$ gives
\begin{eqnarray*}
    Q(\vE_{\vj}+\vE_{\vk})+Q(\vE_{\vj})+Q(\vE_{\vk}) &=&
    Q(\vE_{\vj}+\vE_{\vk}) + 0 + 0\\
    &=&
    \sum_{\vi\,\in\, I_{m,0}}\frac{[\vE_\vi,\vE_{\vj}+\vE_{\vk}][\vE_{\vi'},\vE_{\vj}+\vE_{\vk}]}
                              {[\vE_\vi,\vE_{\vi'}]}.
\end{eqnarray*}
The numerator of a summand of the above sum can only be different from zero if
$\vi\in \{\vj',\vk'\}$ and $\vi'\in \{\vj',\vk'\}$. These conditions can only
be met for $\vk=\vj'$, whence in fact at most one summand, namely for $\vj\in
I_{m,0}$ the one with $\vi=\vj$, and for $\vj'\in I_{m,0}$ the one with
$\vi=\vj'$, can be non-zero. So
\begin{equation*}
    Q(\vE_{\vj}+\vE_{\vk})+Q(\vE_{\vj})+Q(\vE_{\vk}) = 0 = [\vE_\vj,\vE_\vk]
    \mbox{~~~for~~~}\vk\neq\vj'
\end{equation*}
 and, irrespective of whether $\vi=\vj$ or $\vi=\vj'$, we have
\begin{equation*}
    Q(\vE_{\vj}+\vE_{\vj'})+Q(\vE_{\vj})+Q(\vE_{\vj'})
    =\frac{[\vE_\vj,\vE_{\vj}+\vE_{\vj'}][\vE_{\vj'},\vE_{\vj}+\vE_{\vj'}]}
                              {[\vE_\vj,\vE_{\vj'}]}
    =  [\vE_\vj,\vE_{\vj'}].
\end{equation*}
But this implies that the quadratic form $Q$ polarises to $\Alt$, \emph{i.~e.},
also the second property is satisfied.
\par
(c) Let $\widetilde Q$ be a quadratic form satisfying properties 1 and 2. Hence
the polar form of $Q-\widetilde Q=Q+\widetilde Q$ is zero. We consider $F$ as a
vector space over its subfield $F^\Box$ comprising all squares in $F$. So
$(Q+\widetilde Q):\TProdm \to F$ is a semilinear mapping with respect to the
field isomorphism $F\to F^\Box: x\mapsto x^2$; see, \emph{e.~g.},
\cite[p.~33]{dieu-71}. The kernel of $Q+\widetilde Q$ is a subspace of
$\TProdm$ which contains all decomposable tensors and, in particular, our basis
(\ref{eq:basis}). Hence $Q+\widetilde Q$ vanishes on $\TProdm$, and
$Q=\widetilde Q$ as required.
\end{proof}
\par
From (\ref{eq:def.Q}) and (\ref{eq:[].basis12}), the quadratic form $Q$ can
be written in terms of tensor coordinates $x_\vj\in F$ as
\begin{equation}\label{eq:koo.Q}
    Q\Big(\sum_{\vj\,\in\, I_m} x_{\vj}\vE_{\vj}\Big)
        = \sum_{\vi\,\in\, I_{m,0}}[\vE_\vi,\vE_{\vi'}] x_{\vi}x_{\vi'}
        = \prod_{k=1}^m [\ve_0\oben{k},\ve_1\oben{k}]
          \cdot \sum_{\vi\,\in\, I_{m,0}}x_{\vi}x_{\vi'}.
\end{equation}

The previous results may be slightly simplified by taking symplectic bases,
\emph{i.~e.}, $[\ve_0\oben{k},\ve_1\oben{k}]=1$ for all $k\in\{1,2,\ldots,m\}$,
whence also $[\vE_\vi,\vE_{\vi'}]=1$ for all $\vi\in I_m$.
\par
Observe also that Proposition~\ref{prop:1} fails to hold for $m=1$. A quadratic
form $Q$ vanishing for all decomposable tensors of $\vV_1$ is necessarily zero,
since any element of $\vV_1$ is decomposable. Hence the polar form of such a
$Q$ cannot be non-degenerate.

\begin{thm}\label{thm:1}
Let $m\geq 2$ and $\Char F=2$. There exists in the ambient space of the Segre
$\Segrem(F)$ a regular quadric $\cQ(F)$ with the following properties:
\renewcommand{\theenumi}{\emph{\arabic{enumi}}}
\begin{enumerate}
\item The projective index of $\cQ(F)$ is $2^{m-1}-1$.

\item $\cQ(F)$ is invariant under the group of projective collineations
    stabilising the Segre $\Segrem(F)$.
\end{enumerate}
\end{thm}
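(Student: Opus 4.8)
The natural candidate for $\cQ(F)$ is the quadric $\{F\vX \mid Q(\vX)=0\}$ associated with the quadratic form $Q$ furnished by Proposition~\ref{prop:1}, and my plan is to read off all three assertions from the two defining properties of $Q$ together with its uniqueness. Since the polar form of $Q$ is the symplectic (hence non{\trenn}degenerate) form $\Alt$, the quadric is automatically regular; this disposes of regularity with no further work.

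For the projective index I would pass to a symplectic basis, so that (\ref{eq:koo.Q}) reduces to $Q\big(\sum_{\vj} x_{\vj}\vE_{\vj}\big)=\sum_{\vi\in I_{m,0}}x_{\vi}x_{\vi'}$. This is a sum of $2^{m-1}$ products in pairwise disjoint coordinates, i.e.\ a hyperbolic form with $2^{m-1}$ hyperbolic pairs among the $2^m$ coordinates. Its Witt index is therefore $2^{m-1}$, so the maximal totally singular subspaces have vector dimension $2^{m-1}$, that is, projective dimension $2^{m-1}-1$; this is the characteristic{\trenn}two analogue of the rank/Witt{\trenn}index statement already recorded in Section~\ref{se:background}. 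The span of the basis vectors $\vE_{\vi}$ with $\vi\in I_{m,0}$ is visibly totally singular of vector dimension $2^{m-1}$, which exhibits the bound as attained.

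The heart of the matter is invariance. Let $g\in\Stab{m}{F}$ be induced by a linear map $\tilde g$ lying in the subgroup of $\GL\big(\TProdm\big)$ that preserves decomposable tensors; since that subgroup is generated by the maps (\ref{eq:kronecker}) and (\ref{eq:f-sigma}), it suffices to treat these generators. I would consider the pulled{\trenn}back form $Q\circ\tilde g$. Because $\tilde g$ carries decomposable tensors to decomposable tensors, $Q\circ\tilde g$ still vanishes on all of them, so property~1 of Proposition~\ref{prop:1} is inherited. For property~2 one must track the action of $\tilde g$ on $\Alt$: for a Kronecker product $f_1\otimes\cdots\otimes f_m$ one has $[f_k\va_k,f_k\vb_k]=\det(f_k)\,[\va_k,\vb_k]$ on each two{\trenn}dimensional factor, so by (\ref{eq:[].basisfrei}) the form $\Alt$ is multiplied by the nonzero scalar $\mu:=\prod_k\det f_k$; for a permutation map $f_\sigma$ the form $\Alt$ is preserved outright (with symplectic bases), since passing to the opposite index commutes with permuting the indices. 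Hence in each case the polar form of $Q\circ\tilde g$ equals $\mu\Alt$ with $\mu\neq 0$.

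Now $\mu^{-1}(Q\circ\tilde g)$ vanishes on decomposables and has polar form exactly $\Alt$, so by the uniqueness part of Proposition~\ref{prop:1} it coincides with $Q$; that is, $Q\circ\tilde g=\mu Q$. Thus $\tilde g$ maps the zero set of $Q$ to itself and $g$ fixes $\cQ(F)$. As the relation $Q\circ\tilde g=\mu_g Q$ is plainly multiplicative in $g$, verifying it on the generators establishes property~2 for all of $\Stab{m}{F}$. The step I expect to require the most care is the bookkeeping for the action on $\Alt$ — in particular confirming that the permutation generators preserve $\Alt$ itself rather than merely a proportional form, and that the scalar $\mu$ never vanishes — after which the uniqueness lemma renders the invariance essentially automatic.
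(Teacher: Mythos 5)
Your proposal is correct and follows essentially the same route as the paper: it takes $\cQ(F)$ to be the zero set of the form $Q$ from Proposition~\ref{prop:1}, gets the projective index from the totally singular span of the $\vE_\vi$ with $\vi\in I_{m,0}$ together with non-degeneracy of $\Alt$, and gets invariance from the fact that the generators (\ref{eq:kronecker}) and (\ref{eq:f-sigma}) preserve $\Alt$ up to a non-zero scalar. The only difference is one of explicitness: where the paper simply says ``consequently $Q$ is invariant up to a non-zero factor,'' you spell out the justification via the uniqueness part of Proposition~\ref{prop:1} (rescaling $Q\circ\tilde g$ to have polar form $\Alt$ and invoking uniqueness to get $Q\circ\tilde g=\mu Q$), which is exactly the argument the paper leaves implicit.
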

\begin{proof}
Any $f_k\in\GL(\vV_k)$, $k\in\{1,2,\ldots,m\}$, preserves the symplectic form
$\Alt$ on $\vV_k$ to within a non-zero factor. Any linear bijection $f_\sigma$
as in (\ref{eq:f-sigma}) is a symplectic transformation of $\TProdm$. Hence any
transformation from the group $\Stab{m}{F}$ preserves the symplectic form
(\ref{eq:[].basisfrei}) up to a non-zero factor. Consequently, also $Q$ is
invariant up to a non-zero factor under the action of $\Stab{m}{F}$.
\par
From (\ref{eq:koo.Q}) the linear span of the tensors $\vE_\vj$ with $\vj$
ranging in $I_{m,0}$ is a singular subspace with respect to $Q$. So the Witt
index of $Q$ equals $2^{m-1}$, because $\Alt$ being non-degenerate implies that
a greater value is impossible.
\par
We conclude that the quadric with equation $Q(\vX)=0$ has all the required
properties.
\end{proof}
We henceforth call $\cQ(F)$ the \emph{invariant quadric} of the Segre
$\Segrem(F)$. The case $m=2$ deserves special mention, as the Segre
$\cS_{1,1}(F)$ coincides with its invariant quadric $\cQ(F)$ given by
$Q(\sum_{\vj\in I_2}x_\vj\vE_\vj)=x_{00}x_{11}+x_{01}x_{10}=0$. This result
parallels the situation for $\Char F\neq 2$.

\section{The invariant basis}\label{se:basis}

In what follows $\bF_q$ will denote the Galois field with $q$ elements. We
adopt the notation and terminology from Section~\ref{se:background}, but we
restrict ourselves to the case $F=\bF_2$. Indeed, we shall always identify
$\bF_2$ with the prime field of $\bF_4=\{0,1,\omega,\omega^2\}$, where
$\omega^2+\omega+1=0$. For each $k\in\{1,2,\ldots,m\}$ we fix a basis
$(\ve_0\oben{k},\ve_1\oben{k})$ so that we obtain the tensor basis
(\ref{eq:basis}).
\par
Let $\vV$ be any vector space over $\bF_2$. Then $\vV$ can be embedded in
$\vW:={\bF_4}\otimes_{\bF_2}{\vV}$, which is a vector space over $\bF_4$, by
$\va\mapsto 1\otimes\va$; see, for example, \cite[p.~263]{kostrikin+m-89}. We
shall not distinguish between $\va$ and $1\otimes\va$. Likewise, if $f$ denotes
a linear mapping between vector spaces over $\bF_2$, then the unique
\emph{linear\/} extension of $f$ to the corresponding vector spaces over
$\bF_4$ will also be written as $f$ rather than $1\otimes f$. Similarly, we use
the same symbol for a bilinear form on $\vV$ and its extension to a bilinear
form on $\vW$. After similar identifications, we have
$\bP(\vV)\subset\bP(\vW)$, $\PGL(\vV)\subset\PGL(\vW)$, and so on. We make use
of the usual terminology for real and complex spaces also in our setting. We
address the vectors of $\vV$ to be \emph{real}, we speak of
\emph{complex{\trenn}conjugate} vectors, points, and subspaces. In particular,
a subspace is said to be \emph{real} if it coincides with its
complex{\trenn}conjugate subspace.
\par
Applying this extension to our vector spaces $\vV_k$ and their tensor product
$\TProdm$ gives vector spaces $\vW_k$ and $\bF_4\otimes_{\bF_2}
\big(\TProdm\big)$. The last vector space can be identified with $\TProdWm$ in
a natural way, so that the Segre $\Segrem(\bF_2)=:\Segrem(2)$ can be viewed as
as subset of $\Segrem(\bF_4)=:\Segrem(4)$. Likewise, we have
$\cQ(2):=\cQ(\bF_2)\subset\cQ(\bF_4)=:\cQ(4)$.
\par
From now on we shall make use of the following observation: When the projective
line $\bP(\vV_k)$ is embedded in $\bP(\vW_k)$ advantage can be taken from the
fact that there is a \emph{unique\/} projective basis consisting of the
complex{\trenn}conjugate pair of points, while there is a choice of three
different pairs of points for a projective basis of $\bP(\vV_k)$.

\begin{thm}\label{thm:2}
For each $k\in\{1,2,\ldots,m\}$ let $\bF_4\vu_0\oben{k}$ and
$\bF_4\vu_1\oben{k}$ be the only two points of the projective line
$\bP(\vW_k)=\PG(1,4)$ that are not contained in $\bP(\vV_{k})=\PG(1,2)$. Then
\begin{equation*}
    \cB_m:=\Big\{ \bF_4\vu_{i_1}\oben{1}\otimes\vu_{i_2}\oben{2}
    \otimes\cdots\otimes\vu_{i_m}\oben{m}
    \mid (i_1,i_2,\ldots,i_m)\in I_m \Big\}
\end{equation*}
is a basis of $\bP\big(\TProdWm\big)=\PG(2^{m}-1,4)$ which is invariant, as a
set, under the stabiliser $\Stab{m}{\bF_2}=:\Stab{m}{2}$ of the Segre
$\Segrem(2)$.
\end{thm}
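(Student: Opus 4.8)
The plan is to prove two things: first that $\cB_m$ is a projective basis (i.e.\ the $2^m$ tensors $\vu_{i_1}\oben{1}\otimes\cdots\otimes\vu_{i_m}\oben{m}$ form a vector-space basis of $\TProdWm$), and second that the set $\cB_m$ is permuted among itself by every generator of $\Stab{m}{2}$.

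For the first claim I would argue locally on each factor. On the projective line $\bP(\vW_k)=\PG(1,4)$, the two points $\bF_4\vu_0\oben{k}$ and $\bF_4\vu_1\oben{k}$ outside $\PG(1,2)$ form a complex-conjugate pair and in particular are distinct, so $(\vu_0\oben{k},\vu_1\oben{k})$ is a basis of the two-dimensional space $\vW_k$. Taking the tensor product of bases of the $\vW_k$ yields a basis of $\TProdWm$, and passing to the spanned points gives a projective basis of $\PG(2^m-1,4)$. This step is essentially formal.

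The substance of the theorem is the invariance, and here is where I expect the main obstacle. The group $\Stab{m}{2}$ is induced by transformations of two types: Kronecker products $f_1\otimes\cdots\otimes f_m$ with $f_k\in\GL(\vV_k)$, and the permutation maps $f_\sigma$ of (\ref{eq:f-sigma}). Because the basis $\cB_m$ is itself a tensor product of the local point-pairs $\{\bF_4\vu_0\oben{k},\bF_4\vu_1\oben{k}\}$, it suffices to check that each type of generator permutes these local pairs (or relabels the factors). For a Kronecker product this reduces to a one-factor statement: I must show that each $f_k\in\GL(\vV_k)$, extended $\bF_4$-linearly to $\vW_k$, maps the pair $\{\bF_4\vu_0\oben{k},\bF_4\vu_1\oben{k}\}$ to itself. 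The key point is the \emph{uniqueness} emphasised just before the theorem: these are precisely the two points of $\PG(1,4)$ not lying on the real subline $\PG(1,2)$. Since $f_k$ is defined over $\bF_2$, it maps $\bP(\vV_k)=\PG(1,2)$ to itself, hence its extension fixes the complement $\PG(1,4)\setminus\PG(1,2)$ setwise, so it permutes $\{\bF_4\vu_0\oben{k},\bF_4\vu_1\oben{k}\}$. Therefore $f_1\otimes\cdots\otimes f_m$ sends each basis point $\bF_4\vu_{i_1}\oben{1}\otimes\cdots\otimes\vu_{i_m}\oben{m}$ to another point of the same product form, i.e.\ into $\cB_m$.

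It remains to treat the permutation generators $f_\sigma$. Here I would note that $\sigma$ relabels the tensor factors, and since the local pairs $\{\bF_4\vu_0\oben{k},\bF_4\vu_1\oben{k}\}$ are defined in the same intrinsic manner for every $k$, the identification $\vV_k\to\vV_{\sigma(k)}$ carries the distinguished pair of the $k$-th factor to that of the $\sigma(k)$-th factor. Consequently $f_\sigma$ merely permutes the factors of each product tensor, again landing inside $\cB_m$. Since both families of generators preserve $\cB_m$ as a set, so does the whole group $\Stab{m}{2}$, which completes the argument. The one delicate point worth stating carefully is the reduction from invariance of the tensor basis to the factorwise statement, since a product of point-sets is preserved exactly when the factors are preserved, and this relies on $\cB_m$ having the product structure exhibited in its definition.
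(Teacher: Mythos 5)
Your proof is correct and follows essentially the same route as the paper: linear independence of the pair $(\vu_0\oben{k},\vu_1\oben{k})$ in each factor gives the tensor basis, and invariance rests on the pair $\{\bF_4\vu_0\oben{k},\bF_4\vu_1\oben{k}\}$ being characterised as the complement of $\PG(1,2)$ in $\PG(1,4)$, hence preserved (up to relabelling) by every generator of $\Stab{m}{2}$. The paper compresses your generator-by-generator verification into the single remark that these points are ``determined uniquely up to relabelling,'' so your write-up is simply a more explicit rendering of the same argument.
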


\begin{proof}
We may assume that
\begin{equation}\label{eq:basis.w}
    \vu_0\oben{k} =\ve_0\oben{k}+\omega\ve_1\oben{k}\mbox{~~~~and~~~~}
    \vu_1\oben{k} =\ve_0\oben{k}+\omega^2\ve_1\oben{k}
    = (\ve_0\oben{k}+\ve_1\oben{k}) + \omega\ve_1\oben{k}.
\end{equation}
As $\vu_0\oben{k}$ and $\vu_1\oben{k}$ are linearly independent, the $2^m$
tensors
\begin{equation}\label{eq:tensorbasis.w}
    \vU_{i_1,i_2,\ldots,i_m}:=\vu_{i_1}\oben{1}\otimes\vu_{i_2}\oben{2}\otimes\cdots\otimes\vu_{i_m}\oben{m}
    \mbox{~~~with~~~} (i_1,i_2,\ldots,i_m)\in I_m
\end{equation}
constitute a basis of $\TProdWm$, whence $\cB_m$ is a projective basis. The
invariance of $\cB_m$ under $\Stab{m}{2}$ follows from the fact that the points
$\bF_4\vu_0\oben{k}$ and $\bF_4\vu_1\oben{k}$ are determined uniquely up to
relabelling.
\end{proof}

We shall refer to $\cB_m$ as the \emph{invariant basis} of the Segre
$\Segrem(2)$. In order to describe the action of the stabiliser group
$\Stab{m}{2}$ of the Segre $\Segrem(2)$ on the invariant basis we need a few
technical preparations:
\par
First, from now on the set $I_m=\{0,1\}^m$ of multi{\trenn}indices will be
identified with the vector space $\bF_2^m$. Secondly, for any $2$-dimensional
vector space $\vV$ over $\bF_2$ we can define the \emph{$\bF_2$-valued sign
function\/} $\sgn_2 :\GL(V)\to\bF_2$ to be $0$ if $f$ induces an even
permutation of $\vV\setminus\{0\}$ and $1$ otherwise.

\begin{prop}\label{prop:2}
The stabiliser group $\Stab{m}{2}$ of the Segre $\Segrem(2)$ has the following
properties:
\renewcommand{\theenumi}{\emph{\arabic{enumi}}}
\begin{enumerate}
\item Let $f_k\in\GL(\vV_k)$ for $k\in\{1,2,\ldots,m\}$ and write
    \begin{equation}\label{eq:schieb}
        \vs:=(\sgn_2 f_1,\sgn_2 f_2,\ldots,\sgn_2 f_m)\in\bF_2^m.
    \end{equation}
The collineation given by $f_1\otimes f_2 \otimes \cdots \otimes f_m$ sends
any point $\bF_4\vU_\vi\in\cB_m$ to the point $\bF_4\vU_{\vi+\vs}\in\cB_m$.

\item $\Stab{m}{2}$ acts transitively on the invariant basis $\cB_m$.

\item Let $\sigma\in S_{m}$ be a permutation and define $f_\sigma$ as in
    \emph{(\ref{eq:f-sigma})}. Then $f_\sigma$ sends any point\/
    $\bF_4\vU_\vi\in\cB_m$ to\/ $\bF_4\vU_{\sigma(\vi)}\in\cB_m$.
\end{enumerate}
\end{prop}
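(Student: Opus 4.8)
The plan is to prove the three parts by tracking how the generating transformations of $\Stab{m}{2}$ act on the complex-conjugate basis points $\bF_4\vu_0\oben{k},\bF_4\vu_1\oben{k}$ of each line $\bP(\vW_k)$, and then extending multiplicatively to the tensor basis. The key structural observation is the one emphasised just before the proposition: over $\bF_4$ each line $\bP(\vW_k)$ has a \emph{unique} pair of non-real points, so any $f_k\in\GL(\vV_k)$ either fixes both points $\bF_4\vu_0\oben{k},\bF_4\vu_1\oben{k}$ or swaps them. Which of the two cases occurs should be exactly governed by $\sgn_2 f_k$, and establishing this correspondence is the crux of part~1.

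First I would settle the single-factor case. Using the explicit coordinates \eqref{eq:basis.w}, I would check how a representative $f_k$ permutes the three real points $\bF_4\ve_0\oben{k}, \bF_4\ve_1\oben{k}, \bF_4(\ve_0\oben{k}+\ve_1\oben{k})$ of $\bP(\vV_k)$ and correspondingly the conjugate pair. Since $\GL(\vV_k)$ over $\bF_2$ induces the full symmetric group $S_3$ on these three real points, a transposition of two real points fixes the conjugate pair while a $3$-cycle swaps $\bF_4\vu_0\oben{k}$ and $\bF_4\vu_1\oben{k}$ (this can be verified on one $3$-cycle, e.g.\ the map $\ve_0\oben{k}\mapsto\ve_1\oben{k}\mapsto\ve_0\oben{k}+\ve_1\oben{k}$, by a direct substitution into \eqref{eq:basis.w}). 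The parity of the induced permutation of the three real points equals the parity of the induced permutation of $\vV_k\setminus\{0\}$, so $\sgn_2 f_k=1$ precisely when $f_k$ swaps the two non-real points, i.e.\ $f_k$ sends $\bF_4\vu_{i_k}\oben{k}$ to $\bF_4\vu_{i_k+\sgn_2 f_k}\oben{k}$. Taking the Kronecker product then multiplies these effects factorwise, giving $\bF_4\vU_\vi\mapsto\bF_4\vU_{\vi+\vs}$ with $\vs$ as in \eqref{eq:schieb}; this proves part~1.

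Part~2 follows immediately from part~1: for any prescribed $\vs\in\bF_2^m$ I can choose each $f_k$ with $\sgn_2 f_k$ equal to the $k$-th coordinate of $\vs$ (a transposition of real points for $0$, a $3$-cycle for $1$), so the group element $f_1\otimes\cdots\otimes f_m$ sends $\bF_4\vU_{\bm 0}$ to $\bF_4\vU_{\vs}$. As $\vs$ ranges over $\bF_2^m$ this reaches every point of $\cB_m$, establishing transitivity. Part~3 is purely combinatorial: the map $f_\sigma$ of \eqref{eq:f-sigma} is defined to permute the real basis tensors $\vE_\vi$ by $\vE_\vi\mapsto\vE_{\sigma(\vi)}$, hence it is the linear extension of the permutation of factors $\vV_k\to\vV_{\sigma(k)}$, and by the same naturality it permutes the non-real building blocks by $\vu_{i_k}\oben{k}\mapsto\vu_{i_k}\oben{\sigma(k)}$; applying this to \eqref{eq:tensorbasis.w} reindexes the factors and yields $\vU_\vi\mapsto\vU_{\sigma(\vi)}$, with $\sigma(\vi)$ the action on $I_m=\bF_2^m$ defined in \eqref{eq:f-sigma}.

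The main obstacle I anticipate is part~1, specifically pinning down the claim that $\sgn_2 f_k$ is the indicator of whether $f_k$ swaps the two non-real points. The subtlety is that $\sgn_2$ is defined via the parity of the permutation of all of $\vV_k\setminus\{0\}$ (three vectors), whereas the action on $\bP(\vW_k)$ concerns the two non-real \emph{points}; I would need to confirm that the isomorphism $\GL(\vV_k)\cong S_3$ identifies the sign character on nonzero vectors with the sign character on real points, and that an odd element acts as the nontrivial element of the residual $\mathbb{Z}/2$ on the conjugate pair. This is most safely done by verifying the statement on generators of $S_3$ (one transposition and one $3$-cycle) using \eqref{eq:basis.w} directly, after which multiplicativity handles everything else with no further case analysis.
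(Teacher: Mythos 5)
Your overall strategy for part~1 --- determine the action on the unique conjugate pair for generators of $\GL(\vV_k)\cong S_3$ and extend multiplicatively to the tensor basis --- is sound and is close in spirit to the paper's proof. But your key intermediate claim is false, and in fact exactly backwards. A $3$-cycle, say $f\colon \ve_0\oben{k}\mapsto\ve_1\oben{k}\mapsto\ve_0\oben{k}+\ve_1\oben{k}$, \emph{fixes} both non-real points: its characteristic polynomial is $x^2+x+1$, whose roots over $\bF_4$ are $\omega$ and $\omega^2$, so $\vu_0\oben{k}$ and $\vu_1\oben{k}$ are eigenvectors; indeed the very substitution you propose gives $f(\vu_0\oben{k})=\ve_1\oben{k}+\omega(\ve_0\oben{k}+\ve_1\oben{k})=\omega\ve_0\oben{k}+\omega^2\ve_1\oben{k}=\omega\,\vu_0\oben{k}$, not a swap. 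Conversely, a transposition $g\colon \ve_0\oben{k}\leftrightarrow\ve_1\oben{k}$ \emph{swaps} the pair, since $g(\vu_0\oben{k})=\ve_1\oben{k}+\omega\ve_0\oben{k}=\omega\,\vu_1\oben{k}$. Your version is also impossible on group-theoretic grounds: the action on the two-element set $\{\bF_4\vu_0\oben{k},\bF_4\vu_1\oben{k}\}$ is a homomorphism $S_3\to S_2$, and if transpositions acted trivially then, transpositions generating $S_3$, the whole action would be trivial --- contradicting your claim that $3$-cycles swap. You then compound this with a second, compensating error: you implicitly treat transpositions as even and $3$-cycles as odd, whereas a transposition of the three vectors of $\vV_k\setminus\{0\}$ has $\sgn_2=1$ and a $3$-cycle has $\sgn_2=0$. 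The two mistakes cancel, which is why the correspondence you finally assert ($\sgn_2 f_k=1$ precisely when the pair is swapped) is the correct one and is what the paper proves --- there by observing that an even non-identity element has two distinct eigenvalues over $\bF_4$ (hence fixes both points), while an odd element is an involution with a single fixed real point (hence, being non-diagonalisable in characteristic $2$, must interchange them). As written, however, your verification step would refute your own claim the moment you carried it out.

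The same mislabelling infects part~2: your recipe ``a transposition of real points for $0$, a $3$-cycle for $1$'' must be reversed (identity or $3$-cycle realises $\sgn_2 f_k=0$, a transposition realises $\sgn_2 f_k=1$); with that fixed, part~2 is the paper's argument. Part~3 is correct and coincides with the paper's proof: the factor-permuting maps send the basis pair of $\vV_k$ to that of $\vV_{\sigma(k)}$, hence by (\ref{eq:basis.w}) send $\vu_i\oben{k}$ to $\vu_i\oben{\sigma(k)}$, and reindexing in (\ref{eq:tensorbasis.w}) gives $\vU_\vi\mapsto\vU_{\sigma(\vi)}$. Finally, the ``subtlety'' you worry about at the end is vacuous: over $\bF_2$ the nonzero vectors of $\vV_k$ and the real points of $\bP(\vV_k)$ are in canonical bijection, so the two permutations whose parities you want to compare are literally the same permutation.
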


\begin{proof}
(a) Each mapping $f_k\in\GL(\vV_k)\subset\GL(\vW_k)$ induces a projectivity of
the projective line $\bP(\vW_k)=\PG(1,4)$ which stabilises
$\bP(\vV_k)=\PG(1,2)$.
\par
If $\sgn_2 f_k=0$ then the restriction to $\bP(\vV_k)$ is an even permutation,
namely either a permutation without fixed points or the identity on
$\bP(\vV_k)$. In the first case the characteristic polynomial of $f_k$ has two
distinct zeros over $\bF_4$, whence each of the two points $\bF_4\vu_0\oben{k}$
and $\bF_4\vu_1\oben{k}$ remains fixed. In the second case all points of
$\bP(\vV_k)$ are fixed.
\par
If $\sgn_2 f_k=1$ then $f_k$ gives a permutation of $\bP(\vV_k)$ with precisely
one fixed point. Such an $f_k$ is an involution, whence the points
$\bF_4\vu_0\oben{k}$ and $\bF_4\vu_1\oben{k}$ are interchanged.
\par
We infer from the above results that $f_1\otimes f_2 \otimes \cdots \otimes
f_m$ sends the point of $\cB_m$ with multi-index $\vi\in\bF_2^m$ to the point
of $\cB_m$ with multi-index $(\vi+\vs)\in\bF_2^m$.
\par
(b) Given $\vi,\vj\in \bF_2^m$ we let $\vs:=\vi+\vj$. In order find a
collineation from $\Stab{m}{2}$ taking $\bF_4\vU_\vi$ to $\bF_4\vU_\vj$, it
suffices to choose for all $k\in\{1,2,\ldots,m\}$ some $f_k\in\GL(\vV_k)$ with
$\sgn_2 f_k=s_k$. This can clearly be done, so that $f_1\otimes f_2 \otimes
\cdots \otimes f_m$ yields a collineation with the required properties.
\par
(c) According to the (basis{\trenn}dependent) definition of $f_\sigma$ in
(\ref{eq:f-sigma}), we have to consider the linear bijections $\vV_k\to
\vV_{\sigma(k)}$ sending $(\ve_0\oben{k},\ve_1\oben{k})$ to
$(\ve_0\oben{\sigma(k)},\ve_1\oben{\sigma(k)})$. By (\ref{eq:basis.w}), any
such map sends $(\vu_0\oben{k},\vu_1\oben{k})$ to
$(\vu_0\oben{\sigma(k)},\vu_1\oben{\sigma(k)})$. Now
$f_\sigma(\vU_\vi)=\vU_{\sigma(\vi)}$ follows immediately.
\end{proof}

The \emph{parity} of a point $\bF_4\vU_\vi\in\cB_m$ can be defined as the
parity of the multi-index $\vi$ (\emph{i.~e.}, it is even or odd according to
the number of $1$s among the entries of $\vi$). We write $\cB_m^+$ and
$\cB_m^-$ for the set of base points with even and odd parity, respectively.
Even though we can distinguish points of even and odd parity due to our fixed
bases $(\ve_0\oben{k},\ve_1\oben{k})$, a change of bases in the vector spaces
$\vV_k$ may alter the parity of a point. But \emph{having the same parity\/} is
an equivalence relation on $\cB_m$ with two equivalence classes, namely
$\cB_m^+$ and $\cB_m^-$, each of cardinality $2^{m-1}$.
\par

We define the Hamming distance between $\bF_4\vU_\vi$ and $\bF_4\vU_\vj$ as the
Hamming distance of their multi{\trenn}indices $\vi$ and $\vj$. In particular,
we speak of \emph{opposite} points if $\vi$ and $\vj$ are opposite. For each
point of $\cB_m$ there is a unique opposite point. By (\ref{eq:basis.w}) and
(\ref{eq:tensorbasis.w}) opposite points of $\cB_m$ are
complex{\trenn}conjugate with respect to the Baer subspace
$\bP\big(\TProdm\big)$ of $\bP\big(\TProdWm\big)$. The opposite point to
$\bF_4\vU_\vi$ can also be characterised as the only point
$\bF_4\vU_\vj\in\cB_m$ such that $[\vU_\vi,\vU_\vj]\neq 0$. We remark that the
Hamming distance on $\cB_m$ admits another description due to
$\cB_m\subset\Segrem(4)$. The Hamming distance of $p,q\in\cB_m$ equals the
number of lines on a shortest polygonal path in $\Segrem(4)$ from $p$ to $q$.
\par
From the proof of Proposition~\ref{prop:2} and the above remarks we immediately
obtain:

\begin{thm}\label{thm:3}
The stabiliser group $\Stab{m}{2}$ of the Segre $\Segrem(2)$ acts on the
invariant basis $\cB_m$ (via the multi{\trenn}indices of its points) as the
group of all affine transformations of\/ $\bF_2^m$ having the form $\vi\mapsto
\sigma(\vi)+\vs$, where $\sigma\in S_m$ and $\vs\in\bF_2^m$. Hence, Hamming
distances on $\cB_m$ are preserved under $\Stab{m}{2}$, and the partition
$\cB_m=\cB_m^+\mathrel{\dot\cup}\cB_m^-$ is a $\Stab{m}{2}${\trenn}invariant
notion.
\end{thm}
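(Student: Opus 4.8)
The plan is to read the statement off Proposition~\ref{prop:2}, which already records the action on the multi-indices of the two families of generators of $\Stab{m}{2}$. Recall from Section~\ref{se:background} that $\Stab{m}{2}$ is induced by the subgroup of $\GL\big(\TProdm\big)$ generated by the Kronecker products (\ref{eq:kronecker}) and the maps $f_\sigma$ of (\ref{eq:f-sigma}). By Proposition~\ref{prop:2}(1) such a Kronecker product $f_1\otimes\cdots\otimes f_m$ acts on $\cB_m$ as the translation $\vi\mapsto\vi+\vs$, where $\vs=(\sgn_2 f_1,\ldots,\sgn_2 f_m)$, and by Proposition~\ref{prop:2}(3) each $f_\sigma$ acts as the coordinate permutation $\vi\mapsto\sigma(\vi)$. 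Hence the image of $\Stab{m}{2}$ in the symmetric group on $\cB_m$ is generated by all translations and all coordinate permutations of $\bF_2^m$.

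Next I would identify that generated group with the affine group claimed in the theorem. One inclusion is a one-line closure computation: composing $\vi\mapsto\sigma(\vi)+\vs$ with $\vi\mapsto\tau(\vi)+\vb$ gives $\vi\mapsto(\tau\sigma)(\vi)+(\tau(\vs)+\vb)$, again of the stated form, so the maps $\vi\mapsto\sigma(\vi)+\vs$ already constitute a group containing the generators; thus the image lies inside this affine group. For the reverse inclusion I would invoke Proposition~\ref{prop:2}(2): every translation is realised by a suitable Kronecker product (choose $f_k$ with $\sgn_2 f_k=s_k$), and every $\sigma\in S_m$ is realised by $f_\sigma$. Since all generators of the affine group occur in the image, the image is exactly the affine group $\{\vi\mapsto\sigma(\vi)+\vs\}$.

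The two consequences are then routine invariance checks for this affine group. For the Hamming distance it suffices to note that a coordinate permutation merely relabels the positions in which two multi-indices differ, while a translation $\vi\mapsto\vi+\vs$ leaves the set of differing positions of any pair unchanged; hence every affine map, and therefore every element of $\Stab{m}{2}$, preserves Hamming distances on $\cB_m$. For the parity partition, a coordinate permutation fixes the weight of each multi-index modulo two, whereas a translation by $\vs$ shifts every parity by the constant parity of $\vs$; consequently each affine map either fixes both of $\cB_m^+,\cB_m^-$ or interchanges them, and in either case sends the partition $\cB_m=\cB_m^+\mathrel{\dot\cup}\cB_m^-$ to itself.

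I expect no genuine obstacle, since all the substance sits in Proposition~\ref{prop:2}; the only point needing a little care is to argue \emph{equality} of the image with the affine group rather than mere containment, which the realisability statement in part (2) supplies. I would also remark that this action need not be faithful---a collineation of $\PG(2^m-1,4)$ is not pinned down by its effect on the basis $\cB_m$ alone---so the theorem is correctly read as a description of the image of the action, which is precisely what the argument above delivers.
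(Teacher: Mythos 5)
Your proposal is correct and follows essentially the same route as the paper, which states Theorem~\ref{thm:3} as an immediate consequence of (the proof of) Proposition~\ref{prop:2} together with the preceding remarks on Hamming distance and parity. Your write-up merely makes explicit what the paper leaves implicit: the closure computation for maps of the form $\vi\mapsto\sigma(\vi)+\vs$, the two inclusions showing the image of the action equals this affine group, and the routine invariance checks for Hamming distance and the partition $\cB_m=\cB_m^+\mathrel{\dot\cup}\cB_m^-$.
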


We now use the invariant basis for describing some other
$\Stab{m}{2}${\trenn}invariant subsets. In the following theorem we also make
use of a particular property of Segre varieties over $\bF_2$. Recall that (for
an arbitrary ground field $F$) there are precisely $m$ generators through any
point $p$ of the Segre $\Segrem(F)$. They span the ($m$-dimensional)
\emph{tangent space} of $\Segrem(F)$ at $p$. The \emph{tangent} lines at $p$
are the lines through $p$ which lie in its tangent space. For $F=\bF_2$ there
are $2^m-1$ tangents at $p$. Precisely one of them does not lie in any of the
$(m-1)$-dimensional subspaces which are spanned by $m-1$ generators through
$p$. This line will be called the \emph{distinguished tangent} at $p$.

\begin{thm}\label{thm:4}
The stabiliser group $\Stab{m}{2}$ of the Segre $\Segrem(2)$ has the following
properties:
\renewcommand{\theenumi}{\emph{\arabic{enumi}}}
\begin{enumerate}

\item The union of the skew subspaces $\spn\cB_m^+$ and $\spn\cB_m^-$ is a
    $\Stab{m}{2}${\trenn}invariant subset of\/ $\bP\big(\TProdWm\big)$.

\item The union of the $2^{m-1}$ mutually skew real lines\footnote{Any
    line joining complex{\trenn}conjugate points is real (cf.\ the beginning of
    Section~\ref{se:basis}). It carries three real points.
    We use the symbol $\vee$ for the join of points.}
    \begin{equation}\label{eq:U+U'}
        \bF_4\vU_\vi \vee \bF_4\vU_{\vi'} \mbox{~~~with~~~}\vi\in I_{m,0}
    \end{equation}
    is a $\Stab{m}{2}${\trenn}invariant subset. The $3\cdot 2^{m-1}$ real
    points on these lines comprise an orbit of $\Stab{m}{2}$.

\item If $m$ is even then $\spn\cB_m^+$ and $\spn\cB_m^-$ are real
    subspaces. Each of the lines from \emph{(\ref{eq:U+U'})} is contained
    in precisely one of them.

\item If $m$ is odd then $\spn\cB_m^+$ and $\spn\cB_m^-$ are
    complex{\trenn}conjugate subspaces. All lines from
    \emph{(\ref{eq:U+U'})} meet $\spn\cB_m^+$ and $\spn\cB_m^-$ at
    precisely one point, respectively.

\item All distinguished tangents of the Segre $\Segrem(2)$ meet
    $\spn\cB_m^+$ and $\spn\cB_m^-$ at precisely one point, respectively.

\end{enumerate}
\end{thm}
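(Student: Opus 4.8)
The plan is to prove the five assertions by exploiting the explicit action of $\Stab{m}{2}$ on the invariant basis established in Proposition~\ref{prop:2} and Theorem~\ref{thm:3}, together with the coordinate description of $\spn\cB_m^+$ and $\spn\cB_m^-$ in terms of the multi-index parity. The central observation is that $\Stab{m}{2}$ acts on the index set $\bF_2^m$ as the affine group $\vi\mapsto\sigma(\vi)+\vs$. An affine shift by $\vs$ changes the parity of every multi-index by $|\vs|\bmod 2$, so it either preserves both parity classes (when $|\vs|$ is even) or swaps $\cB_m^+$ with $\cB_m^-$ (when $|\vs|$ is odd); a permutation $\sigma\in S_m$ always preserves parity. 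This immediately gives part~1: the pair $\{\spn\cB_m^+,\spn\cB_m^-\}$ is stabilised as a set, hence so is the union of the two spans. For part~2, I would note that the opposite point $\bF_4\vU_{\vi'}$ is the unique base point with $[\vU_\vi,\vU_{\vi'}]\neq 0$ (an invariant condition), so the set of lines $\bF_4\vU_\vi\vee\bF_4\vU_{\vi'}$ is permuted by $\Stab{m}{2}$; transitivity of $\Stab{m}{2}$ on $\cB_m$ (Proposition~\ref{prop:2}, part~2) then yields that the $3\cdot 2^{m-1}$ real points form a single orbit, once one checks that the third real point on each such line is the sum $\bF_4(\vU_\vi+\vU_{\vi'})$ and is also moved correctly.

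For parts~3 and~4 the key is to compute the complex conjugate of $\spn\cB_m^\pm$. Conjugation fixes the real subspace $\bP(\TProdm)$ pointwise and, by the remark following Proposition~\ref{prop:2}, sends each base point $\bF_4\vU_\vi$ to its opposite $\bF_4\vU_{\vi'}$. Since $\vi'=\vi+(1,1,\ldots,1)$, conjugation acts on indices exactly as the shift by the all-ones vector, whose weight is $m$. Thus conjugation preserves parity when $m$ is even and reverses it when $m$ is odd, giving $\overline{\spn\cB_m^+}=\spn\cB_m^+$ (real) for even $m$ and $\overline{\spn\cB_m^+}=\spn\cB_m^-$ (conjugate pair) for odd $m$. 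The incidence claims then follow by a dimension/parity count: for even $m$, each index $\vi$ and its opposite $\vi'$ have the same parity, so the line $\bF_4\vU_\vi\vee\bF_4\vU_{\vi'}$ lies entirely in one span; for odd $m$, $\vi$ and $\vi'$ have opposite parity, so the line has one endpoint in each span and, being skew to neither span trivially, meets each in exactly that single base point.

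Part~5 is the step I expect to be the main obstacle, because it requires identifying the distinguished tangent at a Segre point $p$ in terms of the invariant basis coordinates, and this object was only defined geometrically (the unique tangent lying in no $(m-1)$-dimensional subspace spanned by $m-1$ generators). The plan is to take $p=\bF_2\,\ve_{i_1}\oben{1}\otimes\cdots\otimes\ve_{i_m}\oben{m}$ and to describe its tangent space and the $m$ generators explicitly; the distinguished tangent should then be spanned by $p$ and the point obtained by replacing each factor $\ve_{i_k}\oben{k}$ simultaneously by the other basis vector $\ve_{1-i_k}\oben{k}$ — i.e.\ the direction $\ve_{i_1'}\oben{1}\otimes\cdots\otimes\ve_{i_m'}\oben{m}$ summed against $p$ — this being the one tangent direction not confined to any coordinate hyperplane of the local product structure. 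I would then re-express this line in the $\vU$-basis via (\ref{eq:basis.w}) and verify by a parity computation that it meets each of $\spn\cB_m^+$ and $\spn\cB_m^-$ in one point. The delicate part is justifying the claimed description of the distinguished tangent rigorously rather than by analogy; once that identification is secured, the incidence with the two spans is a routine coordinate check analogous to parts~3 and~4.
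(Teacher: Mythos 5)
Your parts 1, 3 and 4 are sound and essentially the paper's own argument (invariance via Theorem~\ref{thm:3}; conjugation swaps $\vu_0\oben{k}\leftrightarrow\vu_1\oben{k}$, hence sends $\bF_4\vU_\vi$ to $\bF_4\vU_{\vi'}$, and the parity of $\vi'=\vi+(1,1,\ldots,1)$ settles both cases). Part 2, however, has a genuine gap, and it begins with a misreading of the configuration: the base points $\bF_4\vU_\vi$ and $\bF_4\vU_{\vi'}$ are complex\trenn{}conjugate, hence \emph{imaginary}; the three real points of a line from (\ref{eq:U+U'}) are $\bF_4(\vU_\vi+\vU_{\vi'})$, $\bF_4(\omega\vU_\vi+\omega^2\vU_{\vi'})$ and $\bF_4(\omega^2\vU_\vi+\omega\vU_{\vi'})$, so there is no ``third real point'' --- none of the three real points is a base point. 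Consequently, transitivity of $\Stab{m}{2}$ on $\cB_m$ only gives transitivity on the $2^{m-1}$ lines; to conclude that all $3\cdot2^{m-1}$ real points form a \emph{single} orbit you must additionally show that the stabiliser of one such line acts transitively on its three real points. This is precisely the step hidden in your phrase ``is also moved correctly'', and it is where the paper does real work: choose $f_1\in\GL(\vV_1)$ with $\sgn_2 f_1=0$ and $f_1\neq\id$ (an element of order three); then $\vu_0\oben{1}$ and $\vu_1\oben{1}$ are eigenvectors with eigenvalues $\lambda$ and $\lambda^2$, so $f_1\otimes\id_{\vV_2}\otimes\cdots\otimes\id_{\vV_m}$ fixes both base points of a line (\ref{eq:U+U'}) with $i_1=0$ while multiplying the ratio of the two coordinates by $\lambda$, i.e.\ it induces a $3$-cycle on the three real points. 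Without such an element the orbit claim does not follow from transitivity on $\cB_m$.

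Part 5 contains an outright error. The tangent space of $\Segrem(2)$ at $p=\bF_2\vE_\vi$ is spanned by $\vE_\vi$ together with the $m$ basis tensors whose multi-index differs from $\vi$ in exactly one entry; the opposite tensor $\vE_{\vi'}$ differs in all $m$ entries, so for $m\geq2$ it does not lie in the tangent space at all, and the line $p\vee\bF_2\vE_{\vi'}$ is not even a tangent line, let alone the distinguished one. The distinguished tangent at $\bF_2\vE_{1,1,\ldots,1}$ is instead spanned by $\vE_{1,1,\ldots,1}$ and the \emph{sum} $\vE_{0,1,\ldots,1}+\vE_{1,0,1,\ldots,1}+\cdots+\vE_{1,1,\ldots,1,0}$ of one further point on each of the $m$ generators through $p$: that is the unique tangent direction with non-zero component along every generator, hence the unique one avoiding every span of $m-1$ generators --- this is the correct formalisation of your own heuristic. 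Your proposed line in fact falsifies the conclusion: expanding via (\ref{eq:basis.w}), the coefficient of $\vU_\vj$ in $\vE_{0,0,\ldots,0}$ is $\omega^{2m-|\vj|}$ (with $|\vj|$ the number of ones in $\vj$), and for odd $m\geq3$ these coefficients take different values at $|\vj|=1$ and $|\vj|=3$, so no non-trivial combination $a\vE_{1,\ldots,1}+b\vE_{0,\ldots,0}$ has all odd-parity (or all even-parity) coordinates equal to zero; that line therefore misses both $\spn\cB_m^+$ and $\spn\cB_m^-$ entirely. Once the correct line is used, the rest of your plan works and coincides with the paper's computation; note also that the paper only carries it out at the single point $\bF_2\vE_{1,1,\ldots,1}$ and then transfers the result to all distinguished tangents by transitivity of $\Stab{m}{2}$ on the Segre, which is simpler than working at a general point.
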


\begin{proof}
Ad 1 and 2: The assertions on the invariance of
$\spn\cB_m^+\mathrel{\dot\cup}\spn\cB_m^-$ and on the invariance of the union
of all lines from (\ref{eq:U+U'}) are a direct consequence of
Theorem~\ref{thm:3}.
\par
We denote the set of all real points on the lines from (\ref{eq:U+U'}) by
$\cR$. Let $\vj\in I_{m,0}$ and let $p$ be an arbitrary real point on the line
$\bF_4\vU_\vj \vee \bF_4\vU_{\vj'}$. Any collineation from $\Stab{m}{2}$ takes
$p$ to some real point on a line from (\ref{eq:U+U'}), whence the orbit of $p$
is contained in $\cR$.
\par
Conversely, let $q\in\cR$. So there is a $\vk\in I_{m,0}$ with $q
\in\bF_4\vU_\vk \vee \bF_4\vU_{\vk'}$. By Theorem~\ref{thm:3}, there exists a
collineation in $\Stab{m}{2}$ which maps $\bF_4\vU_\vj$ to $\bF_4\vU_\vk$ and,
consequently, also $\bF_4\vU_{\vj'}$ to $\bF_4\vU_{\vk'}$. Furthermore, $p$ is
mapped to some real point $\tilde p$ on the line $\bF_4\vU_\vk \vee
\bF_4\vU_{\vk'}$. There exists $f_1\in\GL(\vV_1)$ with $\sgn_2 f_1 =0$. Then
$\vu_0\oben{1}$ and $\vu_1\oben{1}$ are eigenvectors of $f_1$ with eigenvalues
$\lambda$ and $\lambda^2$, where $\lambda\in\{\omega,\omega^2\}$. (See the
proof of Proposition~\ref{prop:2}.) The linear bijection $f:=
f_1\otimes\id_{\vV_2}\otimes\cdots\otimes\id_{\vV_m}$ has $\vU_\vk$ and
$\vU_{\vk'}$ as eigenvectors with eigenvalues $\lambda$ and $\lambda^2$,
respectively, due to $k_1=0$. Thus the collineation arising from $f$ induces a
non-identical even permutation on the three real points of the line
$\bF_4\vU_\vk \vee \bF_4\vU_{\vk'}$. Such a permutation has only one cycle. So
one of $f$, $f^2$ or $f^3$ yields a collineation from $\Stab{m}{2}$ which maps
$\tilde p$ to $q$.

\par
Ad 3 and 4: Opposite points of $\cB_m$ are complex{\trenn}conjugate and vice
versa. Such points share the same parity for $m$ even, but have different
parity for $m$ odd.

\par
Ad 5: First, we exhibit the distinguished tangent $T$ of the Segre
$\Segrem{(2)}$ at the point $\bF_2\vE_{1,1,\ldots,1}$. On each of the $m$
generators of the Segre through this point we select one more real point,
namely $\bF_2\vE_{0,1,1,\ldots,1}$, $\bF_2\vE_{1,0,1,\ldots,1}$, \ldots,
$\bF_2\vE_{1,1,\ldots,1,0}$ for facilitating our subsequent reasoning. So, the
distinguished tangent $T$ contains the real point
\begin{equation}\label{eq:tangentenpkt1}
    \bF_2(\vE_{0,1,1,\ldots,1}+\vE_{1,0,1,\ldots,1}+\cdots+\vE_{1,1,\ldots,1,0}).
\end{equation}
By (\ref{eq:basis.w}), we have $\ve_0\oben{k}=
\omega^2\vu_0\oben{k}+\omega\vu_1\oben{k}$ and $\ve_1\oben{k}=
\vu_0\oben{k}+\vu_1\oben{k}$ for all $k\in\{1,2,\ldots,m\}$. So
$\vE_{1,1,\ldots,1}=\sum_{\vi\,\in\, I_m}\vU_\vi$ and
\begin{equation*}
    \vE_{0,1,\ldots,1}=\sum_{\vi\,\in\, I_m} x_\vi\oben{1}\vU_\vi
    \mbox{~~with~~}x_\vi\oben{1}=
    \left\{
    \begin{array}{lcr}
        \omega^2 &\mbox{for}& i_1=0,\\
        \omega   &\mbox{for}& i_1=1.
    \end{array}
    \right.
\end{equation*}
\emph{Mutatis mutandis}, we obtain linear combinations for
$\vE_{1,0,1,\ldots,1},\ldots,\vE_{1,1,\ldots,1,0}$ with coefficients
$x_\vi\oben{2},\ldots,x_\vi\oben{m}\in\{\omega^2,\omega\}$. Summing up gives
\begin{equation}\label{eq:tangentenpunkt2}
    \vE_{0,1,1,\ldots,1}+\vE_{1,0,1,\ldots,1}+\cdots+\vE_{1,1,\ldots,1,0}=
    \sum_{\vi\,\in\, I_m}y_\vi\vU_\vi,
\end{equation}
where
\begin{equation*}
    y_\vi=\underbrace{\omega^2+\omega^2+\cdots+\omega^2}
            _{\mbox{\footnotesize\# of zeros in $\vi$}}
    +\underbrace{\omega+\omega+\cdots+\omega}
            _{\mbox{\footnotesize\# of ones in $\vi$}} .
\end{equation*}
There are two cases:
\par
\emph{$m$ even:} Due to $\Char\bF_4=2$ we have $y_\vi=0$ for all $\vi$ with
even parity and $y_\vi=\omega^2+\omega=1$ for all $\vi$ with odd parity. So $T$
meets $\spn{\cB_m^-}$ at the point (\ref{eq:tangentenpkt1}). The sum of the
tensor from (\ref{eq:tangentenpunkt2}) and $\vE_{1,1,\ldots,1}$ determines the
point of intersection of $T$ with $\spn\cB_m^+$.
\par
\emph{$m$ odd:} Due to $\Char\bF_4=2$ we have $y_\vi=\omega^2$ for all $\vi$
with even parity and $y_\vi=\omega$ for all $\vi$ with odd parity. So $T$ meets
$\spn{\cB_m^+}$ at the point
\begin{equation}\label{eq:E_spur}
    \bF_4\Big(\sum_\vj \omega^2\vU_\vj\Big)=\bF_4\Big(\sum_\vj \vU_\vj\Big),
\end{equation}
where $\vj$ ranges over all elements of $I_m$ with even parity, and the
subspace $\spn{\cB_m^-}$ at the point $\bF_4(\sum_\vk \omega
\vU_\vk)=\bF_4(\sum_\vk \vU_\vk)$, where $\vk$ ranges over all elements of
$I_m$ with odd parity.
\par
In either case the two points of intersection are unique, because $\spn\cB_m^+$
and $\spn\cB_m^-$ are skew.
\par
Next, we consider an arbitrary distinguished tangent of the Segre. As all
points of the Segre comprise a point orbit of $\Stab{m}{2}$, also all
distinguished tangents are in one line orbit of $\Stab{m}{2}$. So, all
distinguished tangents share the properties of the tangent $T$.
\end{proof}

Any pair of skew and complex{\trenn}conjugate subspaces of
$\bP\big(\TProdWm\big)$ determines a \emph{geometric line spread} of
$\bP\big(\TProdm\big)$. This spread comprises all real lines which meet one of
the subspaces (and hence both of them in complex{\trenn}conjugate points). Any
of these subspaces is called an \emph{indicator set} of the spread. See
\cite[p.~74]{baer-63a} and \cite[p.~29]{segre-64a}. So, part of
Theorem~\ref{thm:4} can be reformulated as follows:

\begin{cor}\label{cor:1}
If $m$ is odd then the complex{\trenn}conjugate subspaces $\spn\cB_m^+$ and
$\spn\cB_m^-$ are indicator sets of a $\Stab{m}{2}${\trenn}invariant geometric
line spread $\cL$ of\/ $\bP\big(\TProdm\big)=\PG(2^{m}-1,2)$. All distinguished
tangents of the Segre $\Segrem{(2)}$ and all lines given by
\emph{(\ref{eq:U+U'})} belong to this spread.
\end{cor}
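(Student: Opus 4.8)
The plan is to obtain the entire statement by combining Theorem~\ref{thm:4} with the definition of a geometric line spread recalled just above the corollary, so that essentially no fresh computation is required.

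I would begin with the structural input. Since $\cB_m=\cB_m^+\mathrel{\dot\cup}\cB_m^-$ splits a projective basis of $\bP\big(\TProdWm\big)$ into two blocks of $2^{m-1}$ points, the spans $\spn\cB_m^+$ and $\spn\cB_m^-$ are complementary, hence skew, subspaces, each of projective dimension $2^{m-1}-1$. For $m$ odd, Theorem~\ref{thm:4}(4) says these two skew subspaces are moreover complex{\trenn}conjugate. A skew complex{\trenn}conjugate pair is exactly the datum required by the definition (following \cite{baer-63a}, \cite{segre-64a}): it determines a geometric line spread $\cL$ of the Baer subspace $\bP\big(\TProdm\big)=\PG(2^m-1,2)$, consisting of all real lines that meet one (and hence, via conjugation, both) of $\spn\cB_m^+$ and $\spn\cB_m^-$, with the two subspaces as its indicator sets. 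In particular the partition property of $\cL$ comes for free from this construction and need not be verified by hand.

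For the invariance I would argue as follows. Every collineation in $\Stab{m}{2}$ is defined over $\bF_2$, hence maps the Baer subspace to itself and commutes with complex conjugation. By Theorem~\ref{thm:3} the partition $\cB_m=\cB_m^+\mathrel{\dot\cup}\cB_m^-$ is $\Stab{m}{2}${\trenn}invariant, so each group element either fixes the two spans $\spn\cB_m^+$, $\spn\cB_m^-$ or interchanges them. In either case it carries a real line meeting an indicator set to a real line meeting an indicator set, hence permutes the members of $\cL$ among themselves; thus $\cL$ is $\Stab{m}{2}${\trenn}invariant.

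It remains to place the two named families inside $\cL$. The lines (\ref{eq:U+U'}) join the opposite, and therefore complex{\trenn}conjugate, points $\bF_4\vU_\vi$ and $\bF_4\vU_{\vi'}$; such a join is a real line, and by Theorem~\ref{thm:4}(4) it meets $\spn\cB_m^+$ in a single point, so it lies in $\cL$. Similarly, each distinguished tangent of $\Segrem(2)$ is a real line of $\PG(2^m-1,2)$ which, by Theorem~\ref{thm:4}(5), meets $\spn\cB_m^+$ in exactly one point, and hence also belongs to $\cL$. The only point demanding attention is to make sure these lines qualify as spread members in the precise sense of the definition --- real lines of the Baer subspace incident with an indicator set --- rather than arbitrary lines of the ambient $\PG(2^m-1,4)$; this is guaranteed exactly by the real{\trenn}line clauses in Theorem~\ref{thm:4}(2),(4),(5), so no genuine obstacle arises and the corollary is a direct reformulation of the relevant parts of Theorem~\ref{thm:4}.
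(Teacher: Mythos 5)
Your proposal is correct and follows essentially the same route as the paper, which offers no separate proof but presents the corollary as a direct reformulation of Theorem~\ref{thm:4} combined with the indicator-set definition of a geometric line spread quoted just before it. Your added details --- the complementary dimensions of the spans, and the invariance argument via Theorem~\ref{thm:3} together with the fact that elements of $\Stab{m}{2}$ commute with the Baer involution --- only make explicit what the paper leaves implicit (and uses itself in the proof of Proposition~\ref{prop:4}).
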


It is now a straightforward task to establish connections between the
fundamental polarity of the Segre $\Segrem{(4)}$ and the quadric $\cQ(4)$ which
arises according to Theorem~\ref{thm:1}. The reader will easily verify the
following: The fundamental polarity maps each of the lines from (\ref{eq:U+U'})
to the span of the remaining ones. For any even $m$ the subspaces $\spn\cB_m^+$
and $\spn\cB_m^-$ are interchanged under the fundamental polarity, whereas for
$m$ odd each of them is invariant (totally isotropic). The subspaces
$\spn\cB_m^+$ and $\spn\cB_m^-$ are contained in $\cQ(4)$ precisely when $m\geq
2$ is odd.

\begin{exa}\label{exa:m=2}
Let $m=2$. The Segre $\cS_{1,1}(2)$ is a hyperbolic quadric of
$\bP(\vV_1\otimes\vV_2)=\PG(3,2)$. The $15$ points of this projective space
fall into two orbits under the action of $G_{1,1}(2)$: The first orbit is
$\cS_{1,1}(2)$ (nine points), the remaining six points form the second orbit.
It comprises the real points of the lines $\spn\cB_m^+$ and $\spn\cB_m^-$.
There are nine distinguished tangents of the quadric. Together they form the
hyperbolic linear congruence of lines which arises by joining every real point
of $\spn\cB_m^+$ with every real point of $\spn\cB_m^-$.
\end{exa}
The previous example is clearly just an easy exercise, and it could be mastered
without our results about the general case.

\section{The invariant Hermitian variety}\label{se:hermite}

\par
The symplectic form $\Alt$ on $\TProdm$ can be extended in exactly two ways to
a non-degenerate sesquilinear form\footnote{We assume such forms to be linear
in the right and semilinear in the left argument.} on $\TProdWm$. The bilinear
extension is symplectic. In accordance with the notation used elsewhere, it is
also denoted by $\Alt$. The only other extension is sesquilinear with respect
to the Frobenius automorphism $z\mapsto z^2$ of $\bF_4$. Such Hermitian
extension will be written as $\Alt_H$. We have
\begin{equation}\label{eq:[]_H}
    [\vX,\vY]_H=[\overline\vX,\vY]
\end{equation}
for all tensors $\vX,\vY\in\TProdWm$, where $\overline\vX$ denote the
complex{\trenn}conjugate tensor of $\vX$. While $\Alt$ describes the
fundamental polarity of the Segre $\Segrem(4)$, the Hermitian sesquilinear form
$\Alt_H$ yields a unitary polarity of $\bP(\TProdWm)$ and, moreover, the
Hermitian variety $\cH$ comprising all its absolute points. By its definition,
$\cH$ is a $\Stab{m}{2}${\trenn}invariant notion, whence we call it the
\emph{invariant Hermitian variety} of the Segre $\Segrem{(2)}$. Note that
$\cH$, like the invariant basis and the invariant line spread, is an invariant
notion only for $\Segrem{(2)}$, but not for $\Segrem{(4)}$.
\par
We remark that the invariant basis $\cB_m$ is self-polar with respect to the
unitary polarity given by $\Alt_H$. Indeed, given $\vi,\vj\in\bF_2^m$ we have
\begin{eqnarray}\label{eq:U-Hgleich}
    [\vU_\vi,\vU_\vi]_H & = &
    \prod_{k=1}^m [\vu_{i_k}\oben{k},\overline\vu_{i_k}\oben{k}] =
    (\omega+\omega^2)\prod_{k=1}^m [\ve_{0}\oben{k},\overline\ve_{1}\oben{k}]=1,\\
    \label{eq:U-Hungleich}
    [\vU_\vi,\vU_\vj]_H &=&
    \prod_{k=1}^m [\vu_{i_k}\oben{k},\overline\vu_{j_k}\oben{k}]=0
    \mbox{~~~~for all~~~~}\vi\neq\vj,
\end{eqnarray}
since, for example, $i_1\neq j_1$ implies $\vu_{i_1}\oben{1} =
\overline\vu_{j_1}\oben{1}$, whence $[\vu_{i_1}\oben{1},
\overline\vu_{j_1}\oben{1}]=0$.

The following Proposition establishes a link among the invariant line spread
from Corollary~\ref{cor:1}, the invariant quadric $\cQ(2)$, and the invariant
Hermitian variety $\cH$.

\begin{prop}\label{prop:3}
Let $m\geq 2$ be odd. A line $L$ of the invariant geometric line spread $\cL$
is a generator of the invariant quadric $\cQ(4)$ if, and only if, the
intersection point $L\cap\spn\cB_m^+$ belongs to the invariant Hermitian
variety $\cH$. Otherwise that line $L$ is a bisecant of $\cQ(4)$, whence it has
no points in common with $\cQ(2)$.
\end{prop}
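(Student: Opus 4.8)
The plan is to parametrise a spread line by its intersection with the indicator set $\spn\cB_m^+$ and then to read off both the generator condition and the bisecant condition directly from the quadratic form $Q$ restricted to that line. Since $m$ is odd, Theorem~\ref{thm:4}(4) tells us that $\spn\cB_m^+$ and $\spn\cB_m^-$ are complex{\trenn}conjugate, so a line $L\in\cL$ meets $\spn\cB_m^+$ in a single point $P=\bF_4\vX$ and $\spn\cB_m^-$ in the complex{\trenn}conjugate point $\overline P=\bF_4\overline\vX$; thus $L=\bF_4\vX\vee\bF_4\overline\vX$ with $\vX\in\spn\cB_m^+$. Moreover, by the remarks following Corollary~\ref{cor:1}, for $m$ odd both $\spn\cB_m^+$ and $\spn\cB_m^-$ are contained in $\cQ(4)$, so that $Q(\vX)=Q(\overline\vX)=0$.

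The first step is to compute $Q$ along $L$. As $\Alt$ is the polar form of $Q$ and $Q$ vanishes at the two points $P,\overline P$, for $\alpha,\beta\in\bF_4$ one has
\begin{equation*}
    Q(\alpha\vX+\beta\overline\vX)
    =\alpha^2 Q(\vX)+\beta^2 Q(\overline\vX)+\alpha\beta\,[\vX,\overline\vX]
    =\alpha\beta\,[\vX,\overline\vX].
\end{equation*}
Hence $L$ is a generator of $\cQ(4)$ precisely when $[\vX,\overline\vX]=0$, and otherwise $Q$ vanishes on $L$ only at the two points $P$ (from $\beta=0$) and $\overline P$ (from $\alpha=0$).

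The crucial step, and the only one carrying real content, is to identify $[\vX,\overline\vX]$ with a value of the Hermitian form. Because $\Alt$ is alternating and $\Char\bF_4=2$, it is symmetric, so $[\vX,\overline\vX]=[\overline\vX,\vX]$; combined with the defining relation $[\vX,\vX]_H=[\overline\vX,\vX]$ from (\ref{eq:[]_H}) this yields $[\vX,\overline\vX]=[\vX,\vX]_H$. Consequently $L$ is a generator of $\cQ(4)$ if and only if $[\vX,\vX]_H=0$, that is, if and only if $P=L\cap\spn\cB_m^+$ lies on $\cH$. I expect everything outside this identification to be a routine consequence of the two standing facts (the indicator sets being totally singular, and $\Alt$ being the polar form).

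Finally, for the complementary case suppose $P\notin\cH$, so $[\vX,\overline\vX]\neq0$. Then the restriction of $Q$ to $L$ computed above vanishes exactly at $P$ and $\overline P$, so $L\cap\cQ(4)=\{P,\overline P\}$ has precisely two elements and $L$ is a bisecant of $\cQ(4)$. The points $P$ and $\overline P$ are complex{\trenn}conjugate and lie in the skew subspaces $\spn\cB_m^+$ and $\spn\cB_m^-$ respectively; hence $P\neq\overline P$, so neither of them is a real point. Since $\cQ(2)$ is precisely the set of real points of $\cQ(4)$, it follows that $L$ meets $\cQ(2)$ in no point, as claimed.
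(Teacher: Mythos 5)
Your proof is correct and follows essentially the same route as the paper's: both arguments rest on the two facts that $\spn\cB_m^+,\spn\cB_m^-\subset\cQ(4)$ and that $[\vX,\vX]_H=[\overline\vX,\vX]$, your explicit computation $Q(\alpha\vX+\beta\overline\vX)=\alpha\beta\,[\vX,\overline\vX]$ being just an unpacked version of the paper's tangent-hyperplane criterion for the line $\bF_4\vX\vee\bF_4\overline\vX$ to be a generator. You additionally spell out why a bisecant has no point of $\cQ(2)$ (its two points on $\cQ(4)$ are complex-conjugate and non-real), a detail the paper leaves implicit.
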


\begin{proof}
Suppose that $L\cap\spn\cB_m^+=\bF_4\vX$. By a remark at the end of
Section~\ref{se:basis}, we have $\bF_4\vX\in\spn\cB_m^+\subset\cQ(4)$ and
$\bF_4\overline\vX\in\spn\cB_m^-\subset\cQ(4)$. So the line $L=\bF_4\vX\vee
\bF_4\overline\vX$ is either a generator or a bisecant of $\cQ(4)$. The first
possibility occurs precisely when $\bF_4\overline\vX$ lies in the tangent
hyperplane of $\cQ(4)$ at $\bF_4\vX$. Employing (\ref{eq:[]_H}), this in turn
is equivalent to $0=[\overline\vX,\vX]=[\vX,\vX]_H$ which characterises
$\bF_4\vX$ as a point of $\cH$.
\end{proof}
The above Proposition is a special case of \cite[Theorem~1]{dye-86a} on
quadrics which admit a spread of lines. In our context the invariant spread
from Corollary~\ref{cor:1} yields a spread of lines on $\cQ(2)$, since over
$\bF_2$ each line of the invariant spread is either external to or contained in
that quadric.

\section{The Segre variety $\Segre(2)$}\label{se:m=3}

In this section we exhibit the ambient space
$\bP(\vV_1\otimes\vV_2\otimes\vV_3)=\PG(7,2)$ of the Segre $\Segre(2)$. This
space has $2^8-1=255$ points. Furthermore, we have the cardinalities
$\#\Segre(2)=3^3=27$, $\#\cQ(2)=(2^3+1)(2^4-1)=135$ (see
\cite[Theorem~5.21]{hirschfeld-98}), and $\#\cL=255/3=85$.

\begin{prop}\label{prop:4}
Under the action of the stabiliser group $G_{\Segre(2)}$ of the Segre
$\Segre(2)$ the lines of the invariant spread $\cL$ of\/
$\bP(\vV_1\otimes\vV_2\otimes\vV_3)=\PG(7,2)$ fall into four orbits
$\cL_1,\cL_2,\cL_3,\cL_4$. In terms of the invariant basis $\cB_3$ the
following characterisation holds: A line from $\cL$ is in orbit $\cL_r$ if, and
only if, its (imaginary) point of intersection with the subspace $\spn\cB_3^+$
lies in $4-r$ planes of the tetrahedron $\cB_3^+$.
\end{prop}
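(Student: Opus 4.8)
The plan is to transport the statement into the solid $\spn\cB_3^+=\PG(3,4)$ and read off the orbits there. By Corollary~\ref{cor:1} the subspace $\spn\cB_3^+$ is an indicator set of $\cL$, so every line of $\cL$ meets it in exactly one point; since $\#\cL=85$ equals the number of points of $\PG(3,4)$, the assignment $L\mapsto L\cap\spn\cB_3^+$ is a bijection of $\cL$ onto $\PG(3,4)$. Taking the four base points of $\cB_3^+$ as a coordinate frame, a point with homogeneous coordinates $(x_0:x_1:x_2:x_3)$ lies in the facial plane opposite the $j$-th vertex exactly when $x_j=0$; hence the number of tetrahedral planes through it is $4-k$, where $k$ is the number of nonvanishing coordinates. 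Thus the Proposition is equivalent to the claim that the orbits of $\Stab{3}{2}$ on $\PG(3,4)$ are precisely the four sets of points with a fixed number $k\in\{1,2,3,4\}$ of nonzero coordinates, of sizes $4,18,36,27$ (which indeed sum to $85$).

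First I would determine the action of $\Stab{3}{2}$ on this frame. By Proposition~\ref{prop:2} the factor permutations $f_\sigma$ fix $\vU_{000}$ and permute $\vU_{110},\vU_{101},\vU_{011}$ as $S_3$, while the even{\trenn}weight translations $f_1\otimes f_2\otimes f_3$ (those with $\vs$ of even weight) stabilise $\spn\cB_3^+$ and act on the four vertices as the three double transpositions of a Klein four{\trenn}group; together they realise the full symmetric group $S_4$ on the tetrahedron. Next I would exhibit the diagonal collineations: for each factor $k$ pick $f_k$ with $\sgn_2 f_k=0$, so that $\vu_0\oben{k},\vu_1\oben{k}$ are eigenvectors with eigenvalues $\lambda,\lambda^2$ as in the proof of Theorem~\ref{thm:4}. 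Then $f_k\otimes\id\otimes\id$ (and its two analogues) multiplies $\vU_\vi$ by $\lambda$ or $\lambda^2$ according to the value of $i_k$, a diagonal map of $\spn\cB_3^+$. Writing the diagonal torus $T$ of $\PGL(4,\bF_4)$ additively as $(\mathbb{Z}/3)^4/\langle(1,1,1,1)\rangle$ (discrete logarithm to base $\omega$), the three diagonal generators so obtained are $(0,1,1,0)$, $(0,1,0,1)$, $(0,0,1,1)$. Consequently $\Stab{3}{2}$ induces on $\PG(3,4)$ the full monomial group generated by $T$ and the frame symmetries $S_4$.

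From here the classification is immediate: the torus $T$ acts transitively on the points of any fixed support, since it can rescale each nonzero coordinate to $1$, and $S_4$ permutes the supports of a given size transitively; hence the points with exactly $k$ nonzero coordinates form a single orbit for each $k$, giving the ``if'' direction together with the orbit sizes $4,18,36,27$. For the ``only if'' I would verify that $k$ is invariant under the \emph{whole} group, including the odd{\trenn}weight translations that interchange $\spn\cB_3^+$ and $\spn\cB_3^-$. Such a $g$ sends the intersection point $\bF_4\vX$ of a spread line into $\spn\cB_3^-$, so the new intersection with $\spn\cB_3^+$ is $\bF_4\,g\overline\vX$; by (\ref{eq:basis.w}) complex conjugation sends $\vU_\vi$ to its opposite $\vU_{\vi'}$, an exact permutation of the basis, and $g$ itself is monomial, so both operations preserve the support size and $k$ is unchanged. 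Therefore the four support classes are genuine orbits and cannot merge.

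The step I expect to be the main obstacle is showing that the diagonal collineations generate the \emph{full} torus rather than a proper subtorus: because the even{\trenn}parity indices satisfy $i_1+i_2+i_3=0$, the three coordinate characters are dependent on $\cB_3^+$, so one must confirm that the induced diagonal maps still exhaust $T$. This reduces to the small rank check that the three generators, all lying in the first{\trenn}coordinate{\trenn}zero subgroup, span it (the relevant $3\times3$ determinant over $\mathbb{Z}/3$ equals $1$) and that this subgroup maps isomorphically onto $T$ modulo $(1,1,1,1)$. Once the full monomial group is secured, the correspondence between $\cL_r$ and the points lying in $4-r$ tetrahedral planes follows at once; as a consistency check one may note, via Proposition~\ref{prop:3} and the diagonal form $[\vX,\vX]_H=\sum_j x_j^3$ of the Hermitian variety on $\spn\cB_3^+$, that $k$ is even exactly for the generator lines of $\cQ(4)$ and odd for the bisecants, which already separates $\{\cL_2,\cL_4\}$ from $\{\cL_1,\cL_3\}$.
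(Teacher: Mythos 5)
Your proposal is correct, and its core argument is genuinely different from the paper's. The two proofs share the same frame: the bijection $L\mapsto L\cap\spn\cB_3^+$ between $\cL$ and the $85$ points of $\PG(3,4)$, the dictionary ``lies in $4-r$ tetrahedral planes'' $\Leftrightarrow$ ``has exactly $r$ nonzero coordinates'', the use of Proposition~\ref{prop:2} and Theorem~\ref{thm:3} to realise the full $S_4$ on the vertices of $\cB_3^+$, and the closing step that collineations interchanging $\spn\cB_3^+$ and $\spn\cB_3^-$ still preserve the classes (the paper compresses this into the remark that such collineations commute with the Baer involution; your conjugation computation, using that conjugation sends $\vU_\vi$ to $\vU_{\vi'}$, is exactly that argument spelled out). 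Where you diverge is in proving transitivity \emph{inside} each class $\cM_r$: the paper argues geometrically --- the pointwise stabiliser of the tetrahedron acts transitively on the $27$ points of $\Segre(2)$, hence on the $27$ points where distinguished tangents meet $\spn\cB_3^+$, and $\#\cM_4=27$ forces transitivity on $\cM_4$, after which $\cM_2$ and $\cM_3$ are reached by projecting $\cM_4$ from opposite edges and faces --- whereas you compute the induced linear group directly, showing that the three collineations $f_1\otimes\id\otimes\id$, $\id\otimes f_2\otimes\id$, $\id\otimes\id\otimes f_3$ with $\sgn_2 f_k=0$ induce diagonal maps whose exponent vectors are independent over $\mathbb{Z}/3$ and hence generate the full $27$-element torus, so that the induced group contains the whole monomial group of the frame, whose orbits are visibly the four support classes. (Your determinant is $\pm 2$ rather than $1$, depending on row order, but only its nonvanishing mod $3$ matters; also, the containment of the action of the stabiliser of $\spn\cB_3^+$ \emph{inside} the monomial group, which your ``only if'' direction tacitly uses, is immediate from the invariance of $\cB_3$ in Theorem~\ref{thm:2}.) Your route buys an exact identification of the induced group and a structural explanation of why $m=3$ is exceptional: for odd $m>3$ only $m$ diagonal generators are available against a torus of rank $2^{m-1}-1$, so the support classes must split into further orbits --- precisely the phenomenon recorded in the paper's Conclusion. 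The paper's route instead buys, en passant, the identification of $\cM_4$ with the intersection points of the distinguished tangents, a fact reused immediately after the proposition (the lines of $\cL_4$ \emph{are} the distinguished tangents; the skew projection of $\Segre(2)$ into $\cH$) and again in the proof of Proposition~\ref{prop:5}.
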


\begin{proof}
(a) Throughout this proof the pointwise stabiliser and the stabiliser of
$\cB_3^+$ in the group $G_{\Segre(2)}$ are abbreviated by $G_{\mathrm{pw}}^+$
and $G^+$, respectively. We observe that $G_{\mathrm{pw}}^+$ acts transitively
on the points of the Segre $\Segre(2)$: We fix the point
$\bF_2\vE_{111}=\bF_2(\ve_1\oben{1}\otimes\ve_1\oben{2}\otimes\ve_1\oben{3})$.
Given any point of the Segre, say $\bF_2\vA$, where
$\vA=\va_1\otimes\va_2\otimes\va_3$, there are linear bijections
$f_k\in\GL(\vV_k)$ satisfying $\sgn_2 f_k=0$ and $\ve_1\oben{k}\mapsto\va_k$
for $k=\{1,2,3\}$. So $f_1\otimes f_2\otimes f_3$ induces a collineation which
sends $\bF_2\vE_{111}$ to $\bF_2\vA$ and belongs to $G_{\mathrm{pw}}^+$ by
(\ref{eq:schieb}).
\par
We write $\cM_r$, $r\in\{1,2,3,4\}$, for the subset of $\spn\cB_3^+=\PG(3,4)$
comprising all points which lie in precisely $4-r$ planes of the tetrahedron
$\cB_3^+$. So we have $\#\cM_1=4$ vertices, $\#\cM_2=3\cdot 6=18$ edge points,
$\#\cM_3=4\cdot 9=36$ face points, and $\#\cM_4=27$ general points. Clearly,
the $G^+$-orbit of any point from $\spn\cB_3^+$ is contained in one of the sets
$\cM_r$.

(b) We show that $\cM_4$ is an orbit under $G_{\mathrm{pw}}^+$: By
(\ref{eq:E_spur}), the distinguished tangent of the Segre at $\bF_2\vE_{111}$
meets $\spn\cB_3^+$ at the point
$p:=\bF_4(\vU_{000}+\vU_{011}+\vU_{101}+\vU_{110})\in\cM_4$. We infer from the
transitive action of $G_{\mathrm{pw}}^+$ on the Segre $\Segre(2)$ that all
distinguished tangents meet $\spn\cB_3^+$ in points of $\cM_4$. Since
$\#\Segre(2)=27=\#\cM_4$, the group $G_{\mathrm{pw}}^+$ acts transitively on
$\cM_4$.

(c) Any edge of $\cB_3^+$ contains precisely three points of $\cM_2$. We obtain
all of them by projecting $\cM_4$ from the opposite edge, whence
$G_{\mathrm{pw}}^+$ acts transitively on the set of these three points.
Likewise, $G_{\mathrm{pw}}^+$ acts transitively on the nine points of $\cM_3$
in any face of $\cB_3^+$.

(d) We know from Proposition~\ref{prop:2} that $G^+$ acts transitively on the
set of vertices of $\cB_3^+$ via translations $\vi\mapsto \vi+\vs$ on
multi{\trenn}indices. From Theorem~\ref{thm:3} the $G^+$-stabiliser of
$\bF_4\vU_{000}$ acts transitively on the remaining vertices of $\cB_3^+$ via
permutations $\vi\mapsto \sigma(\vi)$ on multi{\trenn}indices. Together with
our previous results this means that each of the four subsets $\cM_r$ is a
$G^+$-orbit. Consequently, each of the sets $\cL_r$ is contained in an orbit
under the action of $G_{\Segre(2)}$ on the line spread $\cL$.
\par
Any collineation from $G_{\Segre(2)}\setminus G^+$ also preserves each of the
sets $\cL_r$, as it commutes with the Baer involution of $\PG(7,4)$ fixing
$\bP(\vV_1\otimes\vV_2\otimes\vV_3)=\PG(7,2)$ pointwise. This completes the
proof.
\end{proof}

From (\ref{eq:U-Hgleich}) and (\ref{eq:U-Hungleich}) the equation of the
Hermitian variety $\cH\cap\spn\cB_3^+$ with respect to the basis
$(\vU_{000},\vU_{011},\vU_{101},\vU_{110})$ reads
\begin{equation*}
    x_{000}^3+x_{011}^3 + x_{101}^3 + x_{110}^3=0.
\end{equation*}
Because of $z^3=1$ for all $z\in\bF_4\setminus\{0\}$, we get
$\cH\cap\spn\cB_3^+ = \cM_2\cup\cM_4$. By Proposition~\ref{prop:3}, the lines
from $\cL_2\cup\cL_4$ are on the invariant quadric $\cQ(4)$. More precisely,
the lines from $\cL_2$ are those generators of $\cQ(4)$ which do not contain
any point of the Segre $\Segre(2)$, whereas the lines from $\cL_4$ are the
distinguished tangents of $\Segre(2)$.
\begin{figure}[ht!]\unitlength0.7cm
\centering
\begin{picture}(17.8,7.8) 
     \put(0,0){\includegraphics[height=7.8\unitlength]{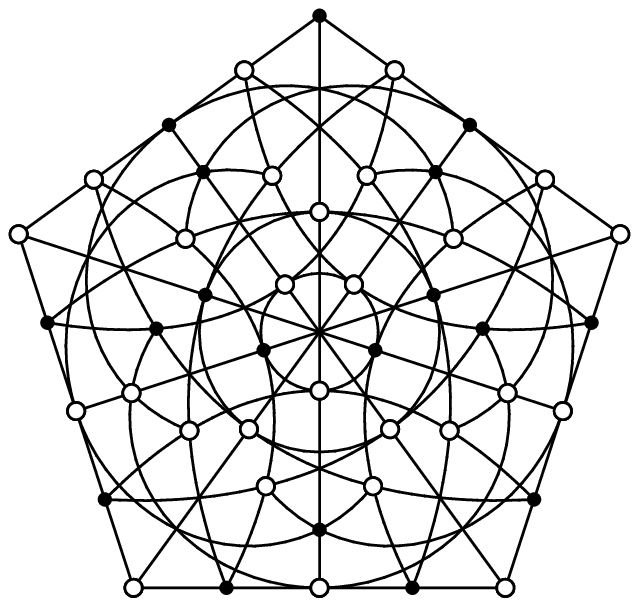}}
     \put(10,0){\includegraphics[height=7.8\unitlength]{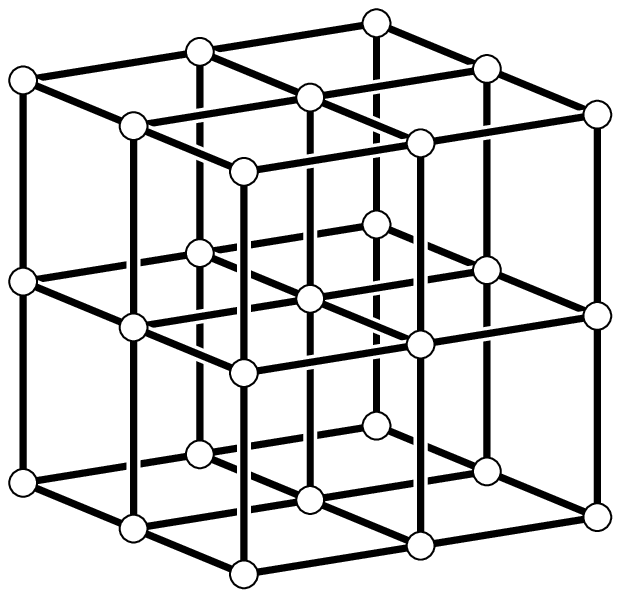}}
\end{picture}
\caption{The Hermitian variety $\cM_2\cup\cM_4$ (left) and the Segre
$\Segre(2)$ (right).}\label{fig:1}
\end{figure}
Figure~\ref{fig:1}, left\footnote{The style of our figure is taken from
\cite[p.~61]{polster-98a}.}, displays the polar space (point-line incidence
structure) on $\cH$. This Hermitian variety consists of $45$ points and carries
$27$ lines (represented by segments and curves), with five points on any line
and three lines on any point. The $27$ points represented by small circles are
those from $\cM_4$, the remaining $18$ points are represented by bullets and
belong to $\cM_2$. The $27$ points of $\cM_4$ can be viewed as a \emph{skew
projection\/} of the Segre $\Segre(2)$ (Figure~\ref{fig:1}, right) into $\cH$
along the invariant line spread. Under this projection collinearity of points
is being preserved.
\par
The lines from $\cL_1$ are the four lines from (\ref{eq:U+U'}). Like the
remaining $36$ lines from $\cL_3$ they are exterior lines (over $\bF_2$) of the
invariant quadric $\cQ(2)$.

\begin{prop}\label{prop:5}
Under the action of the stabiliser group $G_{\Segre(2)}$ of the Segre
$\Segre(2)$ the points of\/ $\bP(\vV_1\otimes\vV_2\otimes\vV_3)=\PG(7,2)$ fall
into five orbits $\cO_1,\cO_2,\ldots,\cO_5$. For $r\in\{1,2,3\}$ the points of
$\cO_r$ are precisely the real points on the lines of $\cL_r$. The orbit
$\cO_4$ comprises those real points on the lines from $\cL_4$ which are off the
Segre $\Segre(2)$, whereas $\cO_5$ equals the Segre $\Segre(2)$.
\end{prop}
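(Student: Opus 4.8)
The plan is to count the real points lying on the spread lines in each orbit class, show these counts partition the $255$ points of $\PG(7,2)$, and then invoke the transitivity results already established to conclude that each class is a single orbit. The decisive structural fact is that $\cL$ is a \emph{spread}, so every point of $\PG(7,2)$ lies on exactly one line of $\cL$; hence the real points are partitioned according to which orbit $\cL_1,\ldots,\cL_4$ their carrying line belongs to. Each real spread line carries three real points (footnote to Theorem~\ref{thm:4}), so the real points split as $3\cdot\#\cL_r$ over the four line-orbits.

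First I would handle the transfer of transitivity from lines to points. For $r\in\{1,2,3\}$ the claim is that $\cO_r$, the set of real points on lines of $\cL_r$, forms a single orbit. Since $G_{\Segre(2)}$ already acts transitively on $\cL_r$ (Proposition~\ref{prop:4}), it suffices to show that the stabiliser of one line $L\in\cL_r$ acts transitively on the three real points of $L$. This is exactly the cycle argument used in part~2 of the proof of Theorem~\ref{thm:4}: choosing $f_1\in\GL(\vV_1)$ with $\sgn_2 f_1=0$ having $\vu_0\oben{1},\vu_1\oben{1}$ as eigenvectors with eigenvalues $\lambda,\lambda^2$, the induced collineation fixes $L$ and permutes its three real points in a single $3$-cycle. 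Thus one of $f,f^2,f^3$ carries any chosen real point to any other, giving transitivity on the three real points of $L$, and hence on all of $\cO_r$.

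Next I would treat $\cL_4$ separately, since its three real points per line are \emph{not} all equivalent under $G_{\Segre(2)}$. By the computation following Proposition~\ref{prop:4}, the lines of $\cL_4$ are precisely the distinguished tangents of $\Segre(2)$, and each such tangent passes through its contact point on the Segre. So one of the three real points on each line of $\cL_4$ lies on $\Segre(2)$ while the other two are off it. Since $\#\cL_4=27=\#\Segre(2)$ and the distinguished tangents are in bijection with their contact points, the on-Segre real points sweep out exactly $\Segre(2)$, which is a single orbit (it is the defining Segre, stabilised by $G_{\Segre(2)}$). This gives $\cO_5=\Segre(2)$. For $\cO_4$, the two off-Segre real points on each tangent: the stabiliser of a tangent fixes its contact point and induces an even permutation with one fixed point on the three real points, hence a transposition of the remaining two, so the two off-Segre points are swapped; combined with transitivity on $\cL_4$, the $2\cdot 27=54$ off-Segre real points form the single orbit $\cO_4$.

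Finally I would verify the partition numerically as a consistency check and confirm that the five sets are genuinely distinct orbits (not merely unions of orbits). Using $\#\cL_1=4$, $\#\cL_2=18$, $\#\cL_3=36$, $\#\cL_4=27$ from the discussion after Proposition~\ref{prop:4}, the real-point counts are $\#\cO_1=12$, $\#\cO_2=54$, $\#\cO_3=108$, $\#\cO_4=54$, $\#\cO_5=27$, summing to $255=\#\PG(7,2)$, so the five sets exhaust the space. The main obstacle is the asymmetry of $\cL_4$: unlike $\cL_1,\cL_2,\cL_3$, its lines do \emph{not} have all three real points in one orbit, so one must carefully separate the contact point from the two exterior points rather than applying the uniform $3$-cycle argument. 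The distinctness of the orbits is immediate since they have pairwise different geometric descriptions (carrying-line orbit, or on/off the Segre) that are $G_{\Segre(2)}$-invariant, so no two can merge.
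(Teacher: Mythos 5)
Your skeleton is the same as the paper's: use Proposition~\ref{prop:4} to reduce each claim to a single representative line of $\cL_r$, then show that the stabiliser of that line acts appropriately on its three real points; your treatment of $\cO_1$ (via Theorem~\ref{thm:4}), of $\cO_5$, the counting check, and the distinctness argument are all fine. But the two steps that carry the actual work both have gaps. For $r=3$ your chosen collineation does not even stabilise the line: a point of $\cM_3$ is spanned by three of the four vertices $\vU_{000},\vU_{011},\vU_{101},\vU_{110}$, and no three of these agree in any coordinate position, so a map such as $f_1\otimes\id_{\vV_2}\otimes\id_{\vV_3}$ acting diagonally in a single factor multiplies the three terms by different eigenvalues ($\omega$ once and $\omega^2$ twice, or vice versa) and therefore moves the point $\bF_4(\vU_{011}+\vU_{101}+\vU_{110})$ off itself. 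The same issue arises for $\cL_2$ unless the representative edge is chosen compatibly with the factor (e.g.\ $\bF_4(\vU_{000}+\vU_{011})$ for the first factor). The repair is exactly the paper's: for $\cL_3$ take $f_1\otimes f_2\otimes f_3$ with all three factors diagonal, so that $\vU_{011}+\vU_{101}+\vU_{110}$ is an eigentensor with eigenvalue $\omega^5=\omega^2\neq 1$, and then run the cycle argument.

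The $\cO_4$ step is the more serious gap. Your justification ``the stabiliser of a tangent \dots induces an even permutation with one fixed point on the three real points, hence a transposition of the remaining two'' is incoherent: an even permutation of three points with a fixed point is the identity, while a transposition is odd. More importantly, the existence of a collineation in $G_{\Segre(2)}$ that stabilises a distinguished tangent and actually swaps its two off-Segre real points is precisely what has to be proved; a priori the line stabiliser could fix all three real points, in which case $\cO_4$ would split into two orbits of $27$ points each and there would be six orbits, not five. (That this cannot be waved away is shown by the tangent itself: its three real points are provably \emph{not} all in one orbit, so the action of a line stabiliser on real points genuinely depends on the line.) You never exhibit the required element. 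The paper does: with $g_1\in\GL(\vV_1)$ given by $\ve_0\oben{1}\mapsto\ve_0\oben{1}+\ve_1\oben{1}$, $\ve_1\oben{1}\mapsto\ve_1\oben{1}$, the map $g_1\otimes\id_{\vV_2}\otimes\id_{\vV_3}$ fixes the contact point $\bF_2\vE_{111}$ and interchanges $\bF_2(\vE_{011}+\vE_{101}+\vE_{110})$ with $\bF_2(\vE_{111}+\vE_{011}+\vE_{101}+\vE_{110})$. Until both constructions are supplied, your proof is incomplete.
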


\begin{proof}
It is clear that $\cO_5$ is an orbit under $G_{\Segre(2)}$. The points of
$\cO_1$ form an orbit according to Theorem~\ref{thm:4}. In order to show that
$\cO_2$ and $\cO_3$ are orbits, we shall select one line of $\cL_2$ and
$\cL_3$, respectively. By Proposition~\ref{prop:4}, it suffices then to show
that all real points of this line are in one orbit. This task will be
accomplished with mappings $f_k\in\GL(V_k)$, $k\in\{1,2,3\}$, given by
$\ve_0\oben{k}\mapsto\ve_1\oben{k}$, $\ve_1\oben{k}\mapsto
\ve_0\oben{k}+\ve_1\oben{k}$. From (\ref{eq:basis.w}), we have
$f_k(\vu_0\oben{k})=\omega\vu_0\oben{k}$ and
$f_k(\vu_1\oben{k})=\omega^2\vu_1\oben{k}$.
\par
Let $L_2\in\cL_2$ be the line joining $\bF_4(\vU_{000}+\vU_{011})\in\cM_2$ with
its complex{\trenn}conjugate point $\bF_4(\vU_{111}+\vU_{100})$. The mapping
$f_1\otimes\id_{\vV_2}\otimes\id_{\vV_3}$ has $\vU_{000}+\vU_{011}$ as
eigentensor with eigenvalue $\omega$. Its complex{\trenn}conjugate tensor is
therefore an eigentensor with eigenvalue $\omega^2$. From the proof of
Proposition~\ref{prop:2}, this implies that
$f_1\otimes\id_{\vV_2}\otimes\id_{\vV_3}$ induces a non-trivial even
permutation on the set of real points of $L_2$. So, under the powers of this
permutation the three real points of $L_2$ are permuted in one cycle.
\par
Let $L_3\in\cL_3$ be the line joining
$\bF_4(\vU_{011}+\vU_{101}+\vU_{110})\in\cM_3$ with its
complex{\trenn}conjugate point. Here $f_1\otimes f_2\otimes f_3$ possesses
$\vU_{011}+\vU_{101}+\vU_{110}$ as eigentensor with eigenvalue
$\omega^5=\omega^2$. Its complex{\trenn}conjugate tensor is therefore an
eigentensor with eigenvalue $\omega$. Now the assertion follows as above.

\par
The distinguished tangent of the Segre $\Segre(2)$ at the point
$\bF_2\vE_{111}$ contains two precisely two points of $\cO_4$. From
(\ref{eq:tangentenpkt1}), these points are
$\bF_2(\vE_{011}+\vE_{101}+\vE_{110})$ and
$\bF_2(\vE_{111}+\vE_{011}+\vE_{101}+\vE_{110})$. Let $g_1\in\GL(\vV_1)$ be
defined by $\ve_0\oben{1}\mapsto\ve_0\oben{1}+\ve_1\oben{1}$,
$\ve_1\oben{1}\mapsto \ve_1\oben{1}$. Then
$g_1\otimes\id_{\vV_2}\otimes\id_{\vV_3}$ will interchange these two points,
whence we may argue as before.
\end{proof}

Let us close this section with a few remarks: The orbits of the stabiliser
group $G_{\Segre(2)}$ are described (without proof) in a completely different
way in \cite[p.~82]{glynn+g+m+g-06z}. The $a,b,c,d,e$-orbits from there are in
our terminology the sets $\cO_5$ ($27$ points), $\cO_2$ ($54$ points), $\cO_3$
($108$ points), $\cO_4$ ($54$ points), and $\cO_1$ ($12$ points), respectively.
The union $\cO_2\cup\cO_4\cup\cO_5$ is the invariant quadric $\cQ(2)$. With
respect to the tensor basis (\ref{eq:basis}) the equation of $\cQ(2)$ reads
\begin{equation}\label{eq:Q3}
    x_{000}x_{111}+x_{001}x_{110}+x_{010}x_{101}+x_{011}x_{100}=0.
\end{equation}
The square of the left hand side of (\ref{eq:Q3}) is \emph{Cayley's
hyperdeterminant} of the $3\times 3\times 3$ array $ (x_\vi)_{\vi\,\in\, I_3}$;
see \cite[Theorem~5.45]{glynn+g+m+g-06z} and compare with \cite{gelfand+k+z-94}
and \cite{glynn-98}.
\par
By virtue of the fundamental polarity of $\Segre(2)$, Proposition~\ref{prop:5}
provides a classification of the hyperplanes of
$\bP(\vV_1\otimes\vV_2\otimes\vV_3)$ under the action of the group
$G_{\Segre(2)}$. Moreover, it gives a classification of the \emph{geometric
hyperplanes} (or \emph{primes}) of $\Segre(2)$, since any geometric hyperplane
of this Segre arises as intersection with a unique hyperplane of the ambient
space \cite{ronan-87}. This is a rather particular property of Segre varieties
$\Segrem(2)$ which is not shared by Segre varieties $\Segrem(F)$ in general
\cite{bichara+m+z-97a}.

\par
The Segre $\Segre(2)$ (as a point-line geometry) appears in the literature in
various guises, namely as the $(27_3,27_3)$ \emph{Gray configuration}
\cite{maru+p+w-05a} or as the \emph{smallest slim dense near hexagon}
\cite{brou+c+h+w-94a}. It is also a point model of the \emph{chain geometry}
based on the $\bF_2$-algebra $\bF_2\times\bF_2\times\bF_2$, the chains being
the twisted cubics on $\Segre(2)$ (\emph{i.~e.} triads of points with mutual
Hamming distance $3$); see \cite[(5.4)]{benz+s+s-81} or
\cite[p.~272]{blunck+he-05}. We add in passing that the tangent lines of these
twisted cubics are just our distinguished tangents of $\Segre(2)$.

\section{Conclusion}

We established several invariant notions for Segre varieties $\Segrem(2)$ over
the field $\bF_2$. For $m\leq 3$ these invariants provide sufficient
information for the classification of the points and hyperplanes of the ambient
space of $\Segrem(2)$. For larger values of $m$ the situation seems to be much
more intricate. For example, when $m$ is odd then the lines of the invariant
spread will fall into at least $2^{m-1}$ orbits, as follows from a
straightforward generalisation of Proposition~\ref{prop:4}. However, this gives
only a lower bound for the number of orbits. Indeed, for $m\geq 3$ there are
$3^m$ distinguished tangents of $\Segrem(2)$, but $3^{2^{m-1}-1}$ points of
$\spn\cB_m^+$ which belong to no face of the simplex $\cB_m^+$. These two
cardinalities coincide only when $m=3$, whence for all odd $m>3$ we no longer
have a one-one correspondence between the set of distinguished tangents and the
set of all points of $\spn\cB_m^+$ which belong to no face of $\cB_m^+$.

\paragraph{Acknowledgements.} This work emerged out of vivid discussions
amongst the members of the Cooperation Group ``Finite Projective Ring
Geometries: An Intriguing Emerging Link Between Quantum Information Theory,
Black-Hole Physics, and Chemistry of Coupling'' at the Center for
Interdisciplinary Research (ZiF), University of Bielefeld, Germany. It was
carried out within the ``Slovak-Austrian Science and Technology Cooperation
Agreement'' under grants SK~07-2009 (Austrian side) and SK-AT-0001-08 (Slovak
side), being also partially supported by the VEGA grant agency projects
2/0092/09 and 2/0098/10.


\small
\newcommand{\Dbar}{\makebox[0cm][c]{\hspace{-2.5ex}\raisebox{0.25ex}{-}}}

\noindent
Hans Havlicek\\
Institut f\"{u}r Diskrete Mathematik und Geometrie\\
Technische Universit\"{a}t\\
Wiedner Hauptstra{\ss}e 8--10/104\\
A-1040 Wien\\
Austria\\
\texttt{havlicek@geometrie.tuwien.ac.at}
\\~\\
\noindent
Boris Odehnal\\
Institut f\"{u}r Diskrete Mathematik und Geometrie\\
Technische Universit\"{a}t\\
Wiedner Hauptstra{\ss}e 8--10/104\\
A-1040 Wien\\
Austria\\
\texttt{boris@geometrie.tuwien.ac.at}
\\~\\
\noindent
Metod Saniga\\
Astronomical Institute\\
Slovak Academy of Sciences\\
SK-05960 Tatransk\'{a} Lomnica\\
Slovak Republic\\
\texttt{msaniga@astro.sk}

\end{document}